\newcommand{\abs}[1]{\left\vert #1 \right\vert}
\newcommand{\norm}[1]{\left\Vert #1 \right\Vert}
\newcommand{\set}[1]{\left\lbrace #1\right\rbrace}
\newcommand{\sse}{\subseteq}
\newcommand{\sprod}[1]{\left\langle #1 \right\rangle}
\newcommand{\cC}{\mathcal{C}}
\newcommand{\cM}{{\mathcal M}}
\newcommand{\cA}{\mathcal{A}}
\newcommand{\cH}{{\mathcal H}}
\newcommand{\cK}{{\mathcal K}}
\newcommand{\R}{{\mathbb R}}
\newcommand{\C}{{\mathbb C}}
\newcommand{\Z}{{\mathbb Z}}
\newcommand{\bT}{\mathbb{T}}
\newcommand{\bH}{{\mathbb H}}
\newcommand{\bK}{{\mathbb K}}
\newcommand{\dZ}{\Z}
\newcommand{\tr}{\text{tr}}
\DeclareMathOperator{\supp}{supp} 
\DeclareMathOperator{\Span}{span}
\DeclareMathOperator{\m}{mod}
\newtheorem{proposition}{Proposition}
\newtheorem{theorem}{Theorem}
\newtheorem{lemma}{Lemma}
\theoremstyle{definition}
\newtheorem{definition}{Definition}
\newtheorem{example}{Example}
\newtheorem{remark}{Remark}
\numberwithin{equation}{section} 
\numberwithin{proposition}{section}
\numberwithin{theorem}{section}
\numberwithin{definition}{section}
\numberwithin{example}{section}
\numberwithin{remark}{section}
\numberwithin{lemma}{section}
\begin{document}
\title{Phase Retrieval from Gabor Measurements}
\author{Irena Bojarovska$^1$ and Axel Flinth$^1$}
\address{$^1$Institut f\"ur Mathematik, Technische Universit\"at Berlin, Germany}
\email{bojarovska@math.tu-berlin.de, flinth@math.tu-berlin.de}
\thanks{The final publication is available at Springer via \href{ http://dx.doi.org/10.1007/s00041-015-9431-0}{http://dx.doi.org/10.1007/s00041-015-9431-0}.}
\begin{abstract}
Compressed sensing investigates the recovery of sparse signals from
linear measurements. But often, in a wide range of applications, one is given only
the absolute values (squared) of the linear measurements. Recovering such signals (not necessarily sparse) is known as the phase retrieval problem. We consider this problem in the case when the measurements are time-frequency shifts
of a suitably chosen generator, i.e. coming from a Gabor frame. We prove an easily checkable injectivity condition
for recovery of any signal from all $N^2$ time-frequency shifts, and for recovery of sparse signals, when only
some of those measurements are given.

\noindent \textsc{Keywords.} Phase retrieval, PhaseLift, Gabor frames, time-frequency analysis, sparse signals, difference sets

\noindent \textsc{Mathematics Subject Classification.} 42C15 $\cdot$ 42A38 $\cdot$ 94A12 $\cdot$ 65T50
\end{abstract}

\maketitle
\begin{section}{Introduction}
Phase retrieval, a common problem in a variety of applications including X-Ray crystallography, optical imaging and electron microscopy, is the task of recovering a signal from the squares of the absolute values of its linear measurements. The best one can hope for in this case is to recover the signal up to a unimodular constant, because $x$ and $cx,$ where $\vert c \vert =1$ will always give the same measurements. To fix notation, let $F = \left( f_i \right)_{i=1}^m \subseteq \bK^N$ be a set of measurement vectors, where $\bK$ is $\R$ or $\C.$ Further, let $\bT = \{ c \in \bK : \vert c \vert =1\}.$ The measurement process is then given by the map
$$\cM_F: \bK^N \slash \bT  \rightarrow \R_+^m, \quad \cM_F(x) = \begin{bmatrix}  \vert \langle x, f_1 \rangle \vert^2   & \vert  \langle x, f_2 \rangle \vert^2  &\ldots& \vert  \langle x, f_m \rangle \vert^2  \end{bmatrix}^T.$$
The task is to recover $x$ up to a global phase, given $\cM_F(x).$  We say that $F$ \textit{allows phase retrieval}, if the map $\cM_F$ is injective. 

There are three main directions of questions that one is interested in when looking into this problem:
\begin{itemize}
\item{Injectivity:}  Which properties of the measurement vectors can give us necessary and/or sufficient conditions on the injectivity of the map $\cM_F?$
\item{Minimal number of measurements:} How many measurements are needed for a set $F$ to allow phase retrieval?
\item{Algorithms:} How can one practically find $x,$ given the intensity measurements $\cM_F(x)$? 
\end{itemize}
Comprehensive answers to these questions and open problems can be found in \cite{balan2006signal, candes2013phase, alexeev2014phase, bandeira2014saving}. Motivated by different applications, these general questions can also be asked for only a particular type of measurements, and/or signals. In this paper we focus on Gabor, or differently said, short-time Fourier measurements, which are time-frequency shifts of a suitably chosen generator. This type of measurements is of particular interest for many applications in speech and audio processing \cite{nawab1983signal}, ptychographical CDI \cite{guizar2008phase} etc. On the signal side, we are looking at the sparsity constraint -- a natural assumption that the 
signal we want to recover has only few non-zero entries, or is a linear combination of few vectors (has sparse representation). 
The sparsity constraint is a novel paradigm for signal- and image processing, and utilized, in particular, for compressed sensing methodologies \cite{eldar2012compressed}. 
Sparse phase retrieval is studied in \cite{ohlsson2014conditions, wang2014phase}.

A combination of phase retrieval from Gabor measurements for sparse signals was firstly considered in \cite{eldar2015sparse}. The theoretical results there are about the recovery of non-vanishing signals from a full set of $N^2$ Gabor measurements, and some intuition about the difficulty of recovery of sparse signals is given. Numerical results show that recovering sparse signals can be effectively conducted with modification of the GESPAR algorithm \cite{shechtman2014gespar}, using less than $N^2$ measurements. 

In the very recent work \cite{iwen2015fast}, both theoretical and numerical investigations show, that $O(N\log^3(N))$ measurements are enough for recovering general signals from block circulant Fourier based measurements, and if the signal is $k$-sparse, only $O(k \log^5(N))$ measurements. The structure of the measurements is similar to the one of Gabor systems, but at this moment it is not clear how their results transfer to the Gabor setting that we consider here.

In this paper, we investigate the problem of phase retrieval from Gabor measurements, for both full and sparse signals. 
Our main concern is the question of injectivity. Using the characterization of phase retrievability via the properties of the kernel of the PhaseLift operator \cite{bandeira2014saving}, we provide a condition on the generator, sufficient for the corresponding Gabor system to allow phase retrieval. We show how this condition can be eased, if the signal that needs to be recovered is non-vanishing. We further provide two examplary classes of generators, complex random signals and characteristic functions of difference sets, which satisfy the above mentioned condition. The common Gabor generators, which are short windows or Alltop sequences, on the other hand -- as we will show -- are not suitable 
for phase retrieval of general signals (they fail to recover sparse signals), this problem was also considered in \cite{iwen2015fast}.

Further, we extend the injectivity condition from  \cite{bandeira2014saving} to the sparse setting, and provide a similar, but more involved condition on the generator which can guarantee us phase retrievability of sparse signals, additionally with less than $N^2$ measurements. We generalize this result also to signals which are sparse in a Fourier domain. When $N$ is prime, we construct generators such that the Gabor system can do $k$-sparse phase retrieval from $O(k^3)$ measurements. As we will see, the result can also be interpreted as an injectivity condition for recovery of structured $k^2$-sparse vectors from $O(k^3)$ \textit{linear} measurements.

Both the injectivity theorems naturally provide a simple algorithm for recovery of signals up to a global phase. When all $N^2$ measurements are given, the recovery of any signal is possible by using solely the fast Fourier transform, making the algorithm extremely fast. If some of the measurements are lost, we can employ $\ell_1$ minimization to get the signal back. We provide several numerical experiments to test this idea for various settings. Although the number of measurements for recovery in our work is still of relatively high order, the ideas we use are novel, and might be of interest for the community for better understanding of this problem, and its future development.

The remainder of this paper is organized as follows: Section \ref{sec:arbitrary} is dedicated to the phase retrievability question 
for general signals, from all $N^2$ Gabor measurements. In Section \ref{sec:sparse}, we focus on the sparse setting, and show that $k$-sparse phase retrieval is possible with order of $k^3$ Gabor measurements.
A detailed description of the algorithm that we propose, and its empirical evaluation is presented in Section \ref{sec:algorithm}. 

\end{section}

\begin{section}{An injectivity condition for arbitrary signals} \label{sec:arbitrary}
\begin{subsection}{Notation and basic objects}
We will be working in the signal space $\C^N,$ as a space of complex valued, $N$ periodic functions with integer 
argument, $x=x(j),$ $j \in \Z,$ which therefore always has to be assumed modulo $N.$ We will use the customary domain $[0,\ldots,N-1]$ of $j,$ but we will often write $j\in \Z_N$ for convenience. The scalar product between two signals $x$ and $y$ is defined as
$$\sprod{x,y} = \sum_{j=0}^{N-1} \bar{x}(j) y(j),$$
and the Hilbert Schmidt scalar product between two $N\times N$ matrices $A$ and $B$ as
	$$\sprod{A,B}_{HS} = \tr(A^*B)= \sum_{i \in \Z_N} \sprod{A e_i,B e_i}. $$
The $N$-th root of unity will be denoted by $\omega = e^{\frac{2\pi i}{N}}.$ We define the (discrete) Fourier transform $\hat{x}$ and the inverse Fourier transform $\check{x}$ of $x \in \C^N$ as follows:
\begin{align*}
	\hat{x}(j)= \sum_{n=0}^{N-1} x(n) \omega^{-nj}, \quad \check{x}(j) = \frac{1}{N}\sum_{n=0}^{N-1} x(n) \omega^{nj}.
\end{align*}
 A family of vectors $\left( \phi_i \right)_{i=1}^M$ in $\C^N$ is called a \textit{finite frame} for $\C^N$ \cite{casazza2012finite}, if there exist
constants $0 < A \leq B < \infty$ such that 
\begin{equation*} \label{eq:deff}
 A \Vert x \Vert^2 \leq \sum_{i=1}^M \vert \langle x,\phi_i \rangle \vert^2 \leq B \Vert x \Vert^2 \quad \text{for all } x \in \C^N.
\end{equation*}
If $A=B$ is possible, then $\left( \phi_i \right)_{i=1}^M$  is called an $A$-\textit{tight frame}.

For $p \in \Z_N$, we define the translation operator $T_p: \C^N \mapsto \C^N$ through
\begin{align*}
	(T_p x)(n)= x(n-p)
\end{align*}
Further, we define for $\ell \in \Z_N$, the modulation operator $M_\ell:\C^N \mapsto \C^N$ through
\begin{align*} 
	(M_\ell x)(n) = \omega^{\ell n} x(n).
\end{align*}
A \textit{Gabor frame} \footnote{We in fact always obtain a frame in this way if $g\neq 0$. The frame is even $N \norm{g}^2$-tight.\cite{pfander2013gabor} } is the collection of all translations and modulations of a single vector $g \in \C^N,$ 
$$\left( M_l T_p g \right)_{l,p=0}^{N-1}.$$
For a pair $\lambda=(p, \ell)$ sometimes we will use the short-hand notation $\Pi_\lambda := M_\ell T_p,$ and $g_\lambda:= \Pi_\lambda g.$  The matrices of those operators are unitary, and the collection of them forms a basis of $\C^{N \times N}$ \cite{pfander2013gabor}, i.e.
\begin{align} \label{eq:orth}
	\sprod{\Pi_\lambda, \Pi_\mu}_{HS} = N \delta_{\mu, \lambda}.
\end{align}
We will need the following well-known commutation relations between translations and modulations.
\begin{lemma}\cite{groechenig01foundations} \label{lem:commutationRelation}
Let $\lambda=(p, \ell), \mu=(q,j) \in \Z_N^2$. Then, we have
\begin{align*}
	M_\ell T_p &= \omega^{\ell p} \, T_p M_\ell, \\
	\Pi_{\lambda}\Pi_{\mu} &= \omega^{-jp}\omega^{\ell q} \Pi_\mu\Pi_\lambda.
\end{align*}
\end{lemma}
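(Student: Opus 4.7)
The plan is to verify the first identity by direct pointwise computation, and then derive the second identity as a formal consequence of it. Concretely, for any $x \in \C^N$ and $n \in \Z_N$, I would compute the two sides of $M_\ell T_p = \omega^{\ell p} T_p M_\ell$ evaluated at $n$: unpacking the definitions gives $(M_\ell T_p x)(n) = \omega^{\ell n} x(n-p)$ on one hand, and on the other $(T_p M_\ell x)(n) = (M_\ell x)(n-p) = \omega^{\ell(n-p)} x(n-p) = \omega^{-\ell p}\, \omega^{\ell n} x(n-p)$. Comparing these two expressions produces the claimed factor $\omega^{\ell p}$. Since $x$ and $n$ are arbitrary, the operator identity follows.

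For the second identity, I would use the first one twice. Writing $\Pi_\lambda \Pi_\mu = M_\ell T_p M_j T_q$, the first identity (in the form $T_p M_j = \omega^{-jp} M_j T_p$) lets me swap the middle factors to get $\Pi_\lambda \Pi_\mu = \omega^{-jp} M_\ell M_j T_p T_q$. Doing the analogous manipulation on $\Pi_\mu \Pi_\lambda = M_j T_q M_\ell T_p$ gives $\Pi_\mu \Pi_\lambda = \omega^{-\ell q} M_j M_\ell T_q T_p$. Since modulations commute with modulations and translations with translations (both act by multiplication on the respective Fourier/time sides, or one can just read it off from $M_\ell M_j = M_{\ell+j}$ and $T_p T_q = T_{p+q}$), the two right-hand sides have the same operator factor $M_\ell M_j T_p T_q$ up to the scalars $\omega^{-jp}$ and $\omega^{-\ell q}$ respectively. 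Dividing one equation by the other yields $\Pi_\lambda \Pi_\mu = \omega^{-jp} \omega^{\ell q} \Pi_\mu \Pi_\lambda$.

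There is no genuine obstacle in this argument — the statement is a classical Heisenberg-type commutation relation and the proof is a routine bookkeeping exercise. The only point demanding care is tracking the sign of the exponent of $\omega$ when moving a modulation past a translation in either direction, i.e.\ remembering to invert the phase factor when applying the first identity as $T_p M_j = \omega^{-jp} M_j T_p$ rather than $M_j T_p = \omega^{jp} T_p M_j$.
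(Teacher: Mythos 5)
Your proof is correct: the pointwise verification of $M_\ell T_p = \omega^{\ell p} T_p M_\ell$ with the paper's conventions $(T_p x)(n)=x(n-p)$, $(M_\ell x)(n)=\omega^{\ell n}x(n)$ checks out, and the derivation of the second relation by commuting the middle factors and using $M_\ell M_j = M_j M_\ell$, $T_p T_q = T_q T_p$ is the standard argument. The paper itself gives no proof (it only cites Gr\"ochenig's book), and your argument is exactly the routine computation that reference supplies.
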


\end{subsection}

\begin{subsection}{Injectivity for full Gabor measurements}
As mentioned before, we want to pose the question under what conditions a signal $x$ from some class $\mathcal{C} \sse \C^N$ can be recovered from a set of its \textit{Gabor intensity measurements} $(\abs{\sprod{x, g_\lambda}}^2)_{\lambda \in \Lambda},$ $\Lambda \sse \Z_N^2.$ Since these measurements are invariant under multiplication with $c \in \bT = \set{c \in \C, \abs{c}=1}$, the best we can hope for is to recover $x$ up to a global phase. If we denote by~$\C^N\slash\bT$ the set of equivalence classes under the equivalence relation $x \sim y \Leftrightarrow \exists c \in \bT : x= cy$, we can formally pose the problem as follows: Under what conditions on $g$ is the map
\begin{align*}
\mathcal{M}_G : \mathcal{C} \slash \bT \to \R^{\abs{\Lambda}}_+, \quad x \mapsto (\abs{\sprod{x, g_\lambda}}^2)_{\lambda \in \Lambda}
\end{align*}
injective?
\begin{definition}
We say that the Gabor system $G=\left( g_\lambda \right)_{\lambda\in \Lambda}$ associated to a generator $g \in \C^N$ is \emph{allowing phase retrieval for $\mathcal{C}$} (or has the \emph{phase retrieval property}), if the map $\cM_G$ is injective.
\end{definition}

We start by considering the problem of recovering arbitrary signals from all measurements, i.e. $\mathcal{C}=\C^N$ and $\Lambda= \Z_N^2$. In order to investigate which Gabor frames are allowing phase retrieval for this class, we will use a  well known characterization of the phase retrieval property in the complex case, given via the properties of the kernel of the \textit{PhaseLift} operator, also called \textit{super analysis operator} in \cite{bandeira2014saving}.  For a set of measurement vectors $\left(f_i\right)_{i=1}^m$ in $\C^N$ this operator is defined as
	\begin{align} \label{eq:opA}
 		\cA : \C^{N\times N} \to \C^m, \quad H \mapsto \left(\sprod{H, f_i f_i^*}_{HS}\right)_{i=1}^m,
 	\end{align}
 Note that when $H$ is in the form $xx^*,$ $ \sprod{H, f_i f_i^*}_{HS} =\sprod{f_i, Hf_i} = \vert \langle x,f_i \rangle \vert^2.$ Also note that the authors of \cite{bandeira2014saving} chose the set of Hermitian matrices as the domain of $\mathcal{A}$. We define $\cA$ in this way in order to avoid some technicalities. However, the space of Hermitian matrices is a very natural domain in the context of phase retrieval, as the next theorem suggests. 
\begin{theorem}\cite{bandeira2014saving} \label{th:PRfull}
 	A set of measurement vectors $\left(f_i \right)_{i=1}^m$ allows phase retrieval if and only if 
 	 	the kernel of the associated map $\cA$ does not contain any Hermitian matrices of rank 1 or 2.
\end{theorem}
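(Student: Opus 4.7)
The core observation is that the PhaseLift operator is designed so that $\mathcal{A}(xx^*) = \mathcal{M}_F(x)$ for every $x\in\mathbb{C}^N$. Hence $\mathcal{M}_F(x)=\mathcal{M}_F(y)$ precisely when $xx^*-yy^*\in\ker\mathcal{A}$. The plan is to translate failure of injectivity of $\mathcal{M}_F$ into the existence of a low-rank Hermitian matrix in $\ker\mathcal{A}$, and vice versa.

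For the forward direction (contrapositive), I would suppose $\mathcal{M}_F$ is not injective and pick representatives $x,y\in\mathbb{C}^N$ with $x\not\sim y$ but $\mathcal{M}_F(x)=\mathcal{M}_F(y)$. Then $H:=xx^*-yy^*$ is Hermitian, lies in $\ker\mathcal{A}$ by the observation above, and is nonzero (otherwise $xx^*=yy^*$, which forces $x\sim y$). Since $H$ is the difference of two rank-one matrices, it has rank at most two, and being nonzero it has rank one or two.

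For the converse I would take a nonzero Hermitian $H\in\ker\mathcal{A}$ of rank $1$ or $2$ and use its spectral decomposition $H=\sum_{k}\lambda_k v_k v_k^*$ with real $\lambda_k\neq 0$ and orthonormal $v_k$. The case where $H$ has eigenvalues of both signs is easy: grouping them yields $H=xx^*-yy^*$ with $x\not\sim y$ (possibly one of them zero, in the rank-one case with a single negative eigenvalue), and $H\in\ker\mathcal{A}$ gives $\mathcal{M}_F(x)=\mathcal{M}_F(y)$. I expect the only genuinely subtle point to be the definite case, where $H$ is (up to sign) positive semidefinite; such matrices cannot be written as $xx^*-yy^*$ at all. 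My plan is to exploit positivity of $\langle f_if_i^*,H\rangle_{HS}=\sum_k\lambda_k\lvert\langle v_k,f_i\rangle\rvert^2$: when all $\lambda_k$ share a sign, the vanishing of this sum for every $i$ forces $\langle v_k,f_i\rangle=0$ for all $i,k$. Then each nonzero $v_k$ is orthogonal to every measurement vector, so $\mathcal{M}_F(v_k)=0=\mathcal{M}_F(0)$ while $v_k\not\sim 0$, and $\mathcal{M}_F$ again fails to be injective.

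Putting the two directions together yields the stated equivalence. The main obstacle, as noted, is the definite branch of the backward direction; one has to resist the reflex of writing every low-rank Hermitian matrix as $xx^*-yy^*$ and instead extract information directly from $\mathcal{A}(H)=0$ using the positivity of $\lvert\langle v_k,f_i\rangle\rvert^2$.
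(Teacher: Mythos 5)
Your proof is correct; note that the paper itself only cites this theorem from \cite{bandeira2014saving} without proof, but it does prove the sparse generalization (Theorem \ref{th:PhasLiftSparse}) by exactly the argument you give: contraposition with $H=xx^*-yy^*$ for the forward direction, and spectral decomposition with separate treatment of the mixed-sign and definite cases for the converse. Your handling of the definite branch --- using positivity of $\sum_k\lambda_k\abs{\sprod{v_k,f_i}}^2$ to conclude each $v_k$ is measured to zero and comparing with the zero vector --- matches the paper's treatment, so there is nothing to add.
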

With this theorem, we can prove that the full set of  $N^2$ Gabor intensity measurements allows phase retrieval, as long as a simple condition is satisfied.
\begin{theorem} \label{theorem:GaborCond}
	Let $g \in \C^N$ be a generator for which 
	\begin{align} \label{eq:GaborCond}
		\sprod{g, g_\lambda} \neq  0
	\end{align}
	for every $\lambda \in \Z_N^2$. Then the corresponding Gabor frame $G = \left( g_\lambda \right)_{\lambda\in \Z_N^2}$ allows phase retrieval.
\end{theorem}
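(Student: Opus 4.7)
The plan is to apply Theorem 1.1: it suffices to show that $\ker\cA$ contains no nonzero Hermitian matrix, in particular none of rank $1$ or $2$. The key observation is that the matrices $\Pi_\mu$, $\mu\in\Z_N^2$, form an orthogonal basis of $\C^{N\times N}$ by (2.1), so every $H\in\C^{N\times N}$ admits an expansion
\begin{align*}
H=\sum_{\mu\in\Z_N^2} c_\mu\,\Pi_\mu,\qquad c_\mu=\tfrac1N\sprod{\Pi_\mu,H}_{HS}.
\end{align*}
The natural strategy is to substitute this expansion into $\cA(H)_\lambda=\sprod{H,g_\lambda g_\lambda^*}_{HS}$, use the commutation relations to simplify, and show that the resulting system has a Fourier-type structure whose only solution is $c_\mu\equiv 0$.

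First, I would rewrite $\cA(H)_\lambda=g_\lambda^* H g_\lambda = g^*\Pi_\lambda^* H \Pi_\lambda g$ (using $H^*=H$), which reduces everything to understanding $\Pi_\lambda^*\Pi_\mu\Pi_\lambda$. Writing $\lambda=(p,\ell)$ and $\mu=(q,j)$, Lemma 1.1 immediately gives $\Pi_\lambda^*\Pi_\mu\Pi_\lambda=\omega^{jp-\ell q}\Pi_\mu$, so
\begin{align*}
\cA(H)_\lambda=\sum_{\mu=(q,j)} c_\mu\,\omega^{jp-\ell q}\,g^*\Pi_\mu g =\sum_{\mu=(q,j)} c_\mu\sprod{g,g_\mu}\,\omega^{jp-\ell q}.
\end{align*}
Setting $d_\mu:=c_\mu\sprod{g,g_\mu}$, the condition $\cA(H)=0$ becomes
\begin{align*}
\sum_{q,j}d_{(q,j)}\,\omega^{jp-\ell q}=0 \quad\text{for all }(p,\ell)\in\Z_N^2,
\end{align*}
which is precisely the (rescaled) $2$D discrete Fourier transform of the array $(d_\mu)_\mu$. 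Since this transform is invertible, all $d_\mu$ must vanish.

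Finally I would invoke the hypothesis \eqref{eq:GaborCond}: since $\sprod{g,g_\mu}\neq 0$ for every $\mu\in\Z_N^2$, the equations $d_\mu=0$ force $c_\mu=0$, hence $H=0$. In particular $\ker\cA$ contains no nonzero Hermitian matrix, let alone one of rank $1$ or $2$, and Theorem 1.1 yields phase retrieval. The only place where care is needed is the bookkeeping of the phases $\omega^{jp-\ell q}$ in the commutation step; once that is pinned down, the argument collapses to the invertibility of the $2$D DFT, so I do not expect a substantive obstacle.
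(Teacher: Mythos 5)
Your proof is correct and follows essentially the same route as the paper's: expand $H$ in the orthogonal basis $(\Pi_\mu)_{\mu\in\Z_N^2}$, use the commutation relation to identify $\cA(H)$ as a two-dimensional discrete Fourier transform of the coefficients $c_\mu\sprod{g,g_\mu}$, and conclude from invertibility of the DFT and the hypothesis that $H=0$, so the kernel of $\cA$ is trivial. The paper packages the identical computation slightly differently --- it performs the inversion as two successive one-dimensional Fourier inversions and identifies $\sprod{\Pi_\mu,H}_{HS}$ with the Fourier transform of the band $\cH_p(i)=H_{i,i-p}$, bookkeeping it reuses later for the frame-bound and sparse results --- but there is no substantive difference.
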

\begin{proof}
Theorem \ref{th:PRfull} suggests that we should investigate $\sprod{g_\lambda, H g_\lambda}$ for $H \in \bH^{N \times N}$. Equality \eqref{eq:orth} implies that
\begin{align*}
	H= \frac{1}{N}\sum_{\mu\in \Z_N^2} \sprod{\Pi_\mu,H}_{HS} \Pi_\mu.
\end{align*}
If $\mu = (p, \ell),$ we have
\begin{align*}
	\sprod{\Pi_\mu,H}_{HS}= \sum_{i \in \Z_N} \sprod{\Pi_\mu e_i, He_i} = \sum_{i \in \Z_N} \sprod{\omega^{\ell(i+p)} e_{i+p}, He_i}= \sum_{i \in \Z_N}\omega^{-\ell i} \sprod{ e_{i}, He_{i-p}} = \widehat{\cH}_p(\ell),
\end{align*}
where $\widehat{\cH}_p$ denotes the (discrete) Fourier transform of the vector $\cH_p$, defined by $\cH_p(i)=H_{i,i-p}$. Note that $\cH_p$ is in some sense the $p$-th 'band' of the matrix $H$.
It hence holds
\begin{align*}
	N \sprod{H, g_\lambda g_\lambda^*}_{HS} = N\sprod{g_\lambda, H g_\lambda} = \sum_{p, \ell} \sprod{g_\lambda,\widehat{\cH}_p(\ell)\Pi_{(p, \ell)}g_\lambda} = \sum_{p, \ell} \widehat{\cH}_p(\ell)\sprod{g_\lambda,\Pi_{(p, \ell)}g_\lambda}.
\end{align*}
If we write $\lambda = (q, j)$, we know by Lemma \ref{lem:commutationRelation} that $\Pi_{(p, \ell)}g_\lambda =\Pi_{(p, \ell)}\Pi_{(q, j)}g = \omega^{-jp} \omega^{\ell q} \Pi_{(q,j)}\Pi_{(p, \ell)}g$. Using this, and the fact that $\Pi_\lambda$ is unitary, we arrive at
\begin{align} \label{eq:GaborPLisFFT}
	N\sprod{g_\lambda, H g_\lambda}= \sum_{p, \ell}\omega^{-jp}\omega^{\ell q} \widehat{\cH}_p(\ell)\sprod{g,g_{p, \ell}}.
\end{align}
Now, assume that this vanishes for all $\lambda= (q, j) \in \Z_N^2$. Fixing $j$, we see that the above expression is just the value of the Fourier transform  of the vector $V^q \in \C^N$ with $p$th entry
\begin{align*}
	V^q(p) = \sum_{ \ell}\omega^{\ell q} \widehat{\cH}_p(\ell)\sprod{g,g_{p, \ell}}
\end{align*}
evaluated at $j$. Since \eqref{eq:GaborPLisFFT} equals zero for all $j$, the vector $V^q$ vanishes for every $q$. Further, we observe that $V^q(p)$ is $N$ times the value at $q$ of the inverse Fourier transform of the vector $w^p \in \C^N,$  where
\begin{align} \label{eq: GaborMultiplier}
	w^p(l) = \widehat{\cH}_p(\ell)\sprod{g,g_{p, \ell}}.
\end{align}
This expression must therefore be equal to zero for all $p$ and $\ell$. With the assumption on the generator, we conclude that all the vectors $\widehat{\cH}_p$ must vanish, and therefore also $H$. $H$ can hence not have rank $1$ or $2$, and the proof is finished.
\end{proof}

	Carefully going through the argument of the last proof, we see that it shows that the only matrix in the kernel of $\cA$ is the zero matrix. 
	Therefore, the proof actually shows that, under the assumption \eqref{eq:GaborCond}, $\cA$ is an injective map. We use this idea to prove the following theorem.
	\begin{theorem} \label{th:framebounds}
		Let $g\in \C^N$ be such that $\sprod{g, g_\lambda}\neq0$ for all $\lambda \in \Z_N^2$. Then, the $N^2$ rank-1 operators $\left(g_\lambda g_\lambda^*\right)_{\lambda \in \Z_N^2}$ form a frame for $\C^{N \times N}$ (equipped with the Hilbert-Schmidt norm) and hence a basis. The frame bounds are given by
		\begin{align*}
			A= N \cdot \min_{\lambda \in \Z^2} \abs{\sprod{g, g_\lambda}}^{2}, \quad B = N \cdot \max_{\lambda \in \Z^2} \abs{\sprod{g, g_\lambda}}^{2}.
		\end{align*}
	\end{theorem}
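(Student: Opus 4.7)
The plan is to re-use the computation (2.2) from the proof of Theorem 2.2, which expresses $N\sprod{H, g_\lambda g_\lambda^*}_{HS} = N\sprod{g_\lambda, H g_\lambda}$ as a two-dimensional discrete Fourier transform. Concretely, if we set $u(p,\ell) := \widehat{\cH}_p(\ell)\sprod{g,g_{p,\ell}}$, then for $\lambda=(q,j)$ that identity reads
\begin{align*}
N\sprod{H, g_\lambda g_\lambda^*}_{HS} = \sum_{p,\ell} \omega^{-jp}\omega^{\ell q}\, u(p,\ell),
\end{align*}
so the map $u \mapsto (N\sprod{H, g_\lambda g_\lambda^*}_{HS})_{\lambda\in\Z_N^2}$ is (up to scaling) a unitary transform on $\C^{N^2}$, being the composition of a discrete Fourier transform in the variable $p$ with an inverse discrete Fourier transform in the variable $\ell$.

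Applying Parseval's identity in each variable separately I would obtain
\begin{align*}
\sum_{\lambda \in \Z_N^2} \abs{\sprod{H, g_\lambda g_\lambda^*}_{HS}}^2 = \sum_{p,\ell \in \Z_N} \abs{\widehat{\cH}_p(\ell)}^2\,\abs{\sprod{g,g_{p,\ell}}}^2.
\end{align*}
Sandwiching the right-hand side between $\min_\lambda \abs{\sprod{g,g_\lambda}}^2$ and $\max_\lambda\abs{\sprod{g,g_\lambda}}^2$ times $\sum_{p,\ell}\abs{\widehat{\cH}_p(\ell)}^2$, and using Parseval once more for the $N$-point Fourier transform of $\cH_p$ together with $\sum_p\sum_i\abs{H_{i,i-p}}^2 = \norm{H}_{HS}^2$, the sum $\sum_{p,\ell}\abs{\widehat{\cH}_p(\ell)}^2$ collapses to $N\norm{H}_{HS}^2$. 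This delivers exactly the advertised frame inequalities with constants $A = N \min_\lambda \abs{\sprod{g,g_\lambda}}^2$ and $B = N \max_\lambda \abs{\sprod{g,g_\lambda}}^2$; the hypothesis $\sprod{g,g_\lambda}\neq 0$ is only used to guarantee $A>0$.

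To finish, I would note that a frame in a finite-dimensional space spans that space, so the $N^2$ operators $(g_\lambda g_\lambda^*)_{\lambda\in\Z_N^2}$ span $\C^{N\times N}$; since $\dim \C^{N\times N}=N^2$, they form a basis. The only delicate point in the argument is tracking the correct normalisation factors in Parseval's identity through the two Fourier transforms (one forward in $p$, one inverse in $\ell$, and a third one for $\widehat{\cH}_p$), but beyond that bookkeeping the statement follows immediately from the structural identity already established in the proof of Theorem 2.2.
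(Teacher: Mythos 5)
Your proposal is correct and follows essentially the same route as the paper: it reuses identity \eqref{eq:GaborPLisFFT}, applies Parseval in the variables $j$ and $q$ (the paper phrases this as the chain of norm identities relating $\cA(H)$, $(V^q)$, $(w^p)$ and $(\widehat{\cH}_p)$), sandwiches $\abs{\sprod{g,g_{p,\ell}}}^2$ between its min and max, and uses one more Parseval step to reach $N\norm{H}_{HS}^2$; your normalisation bookkeeping also checks out.
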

	\begin{proof}
		What we need to prove that for every $H \in \C^{N \times N}$
	\begin{align*}
		A \norm{H}_{HS}^2 \leq \sum_{\lambda \in \Z^2} \abs{\sprod{H,g_\lambda g_\lambda^*}_{HS}}^2 \leq B \norm{H}_{HS}^2.
	\end{align*}	
 In other words, we need to prove that $A\norm{H}_{HS}^2 \leq \norm{\cA(H)}^2 \leq B \norm{H}_{HS}^2$, where $\cA$ is the PhaseLift operator \eqref{eq:opA}. Using the notation of the proof of Theorem \ref{theorem:GaborCond}, the formula \eqref{eq:GaborPLisFFT} states that the $N$-tuple $(V^q)_{q=1}^N \in (\C^N)^N$ is obtained by performing inverse Fourier transforms of the columns of the matrix  $(N \! \sprod{H,g_\lambda g_\lambda^*}_{HS})_{\lambda \in \Z_N^2}=N\cA(H)$. Hence, their norms are related as follows:
\begin{align*}
	\norm{N \cA(H)}^2 = \Vert (\hat{V}^q)_{q=1}^N \Vert^2 = N\norm{(V^q)_{q=1}^N}^2.
\end{align*}
Using the same argument, we obtain
\begin{align*}
	\norm{(V^q)_{q=1}^N}^2 = \norm{(N\check{w}^p)_{p=1}^N}^2 = \frac{N^2}{N}\norm{(w^p)_{p=1}^N}^2 \text{ and } \norm{ (\widehat{\cH}^p)_{p=1}^N}^2 = N \norm{(\cH^p)_{p=1}^N}^2.
\end{align*}
The $N$-tuples $(\widehat{\cH}^p)_{p=1}^N$ and $(w^p)_{p=1}^N$ are related through \eqref{eq: GaborMultiplier}. Therefore, if we define $\alpha =\min_{\lambda \in \Z^2} \abs{\sprod{g, g_\lambda}}$, $\beta = \max_{\lambda \in \Z^2} \abs{\sprod{g, g_\lambda}}$, we have
\begin{align*}
	\norm{(w^p)_{p=1}^N}^2  = \sum_{ (p,\ell) \in \Z_N^2} \abs{w^p(\ell)}^2 = \sum_{ (p,\ell) \in \Z_N^2}  \abs{\widehat{\cH}_p(\ell)\sprod{g,g_{p, \ell}}}^2 \ \ \begin{matrix} \leq \beta^2 \norm{ (\widehat{\cH}^p)_{p=1}^N}^2 \\ 
		\geq \alpha^2 \norm{ (\widehat{\cH}^p)_{p=1}^N}^2 \end{matrix}.
\end{align*}
Finally, the matrix $H$ is obtained by merely permuting the elements of the array $(\cH^p)_{p=1}^N$. Hence $\norm{H}_{HS}^2 = \norm{(\cH^p)_{p=1}^N}^2$. Combining everything, we obtain
\begin{align*}
\norm{\cA(H)}_{HS}^2 = \frac{N}{N^2}\norm{(V^q)_{q=1}^N}^2 = \norm{(w^p)_{p=1}^N}^2 \begin{matrix}&\leq \beta^2 \norm{ (\widehat{\cH}^p)_{p=1}^N}^2 &= N \beta^2 \norm{(\cH^p)_{p=1}^N}^2 &= N \beta^2 \norm{H}_{HS}^2\\ 
		&\geq \alpha^2 \norm{(\widehat{\cH}^p)_{p=1}^N}^2 &= N \alpha^2 \norm{(\cH^p)_{p=1}^N}^2 &= N \alpha^2 \norm{H}_{HS}^2\end{matrix} ,
\end{align*}
which is exactly what we wanted to prove.

	\end{proof}

\subsubsection{Non-vanishing vectors}
We will now show that if we are interested in recovery of only non-vanishing vectors, weaker conditions on the generator 
can be assumed. 
\begin{definition}
	A vector $x \in \C^N$ is called \textit{non-vanishing} (or \textit{full}), if all its entries are nonzero, i.e. 
	$$x(n) \neq 0, \quad \text{ for all } n=0,\ldots, N-1.$$
	By $\cC_f$ we denote the class of all non-vanishing signals in $\C^N$.
\end{definition}
 
This situation is much easier to handle, because, intuitively, the non-presence of ''holes'' in the signals keeps the phases of the entries coupled.  We will use the same technique as in Theorem \ref{theorem:GaborCond} to prove that the injectivity condition can be weakened in this setting.  Note that we are still assuming that all measurements are known.
\begin{theorem}
	 Assume  that
	 \begin{equation} 
	      \sprod{g, g_{p,\ell}}\neq 0 \text{ for } p=0,1 \text{ and } \ell \in \Z_N \label{eq:weakCond}. 
	 \end{equation} 
	 Then the Gabor frame $G=\left( g_\lambda \right)_{\lambda \in \Z_N^2}$ allows phase retrieval for $\cC_{f}.$
\end{theorem}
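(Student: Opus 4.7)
The plan is to mirror the argument of the proof of Theorem \ref{theorem:GaborCond}, but instead of working with an arbitrary $H$ in the kernel of $\cA$, I would specialize to $H = xx^* - yy^*$ with $x,y \in \cC_f$ and ask only for $xx^* = yy^*$ (equivalently, $x \sim y$). Running \eqref{eq:GaborPLisFFT} and \eqref{eq: GaborMultiplier} verbatim, if $\cA(xx^*) = \cA(yy^*)$ then $\widehat{\cH}_p(\ell)\sprod{g,g_{p,\ell}} = 0$ for every $(p,\ell) \in \Z_N^2$, where now $\cH_p(i) = x(i)\overline{x(i-p)} - y(i)\overline{y(i-p)}$. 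The weakened assumption \eqref{eq:weakCond} forces $\widehat{\cH}_p \equiv 0$ only for $p = 0, 1$, hence only $\cH_0 \equiv 0$ and $\cH_1 \equiv 0$. The content of the proof is then to show that these two vanishings alone suffice to conclude $x \sim y$, using that $x,y$ are non-vanishing.

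Translating back, $\cH_0 \equiv 0$ says $\abs{x(i)} = \abs{y(i)}$ for every $i \in \Z_N$, and $\cH_1 \equiv 0$ says $x(i)\overline{x(i-1)} = y(i)\overline{y(i-1)}$ for every $i$. Writing $x(i) = r(i) e^{i\alpha(i)}$ and $y(i) = r(i) e^{i\beta(i)}$, which is legitimate because $r(i) = \abs{x(i)} = \abs{y(i)} > 0$ by the non-vanishing hypothesis, the second identity becomes
\begin{align*}
r(i) r(i-1)\, e^{i(\alpha(i) - \alpha(i-1))} = r(i) r(i-1)\, e^{i(\beta(i) - \beta(i-1))}.
\end{align*}
Dividing by the strictly positive factor $r(i)r(i-1)$ yields $\alpha(i) - \beta(i) \equiv \alpha(i-1) - \beta(i-1) \pmod{2\pi}$ for every $i$, so the function $i \mapsto \alpha(i) - \beta(i)$ is constant modulo $2\pi$. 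Calling this constant $\gamma$, we obtain $x = e^{i\gamma} y$, so $x$ and $y$ coincide in $\C^N / \bT$, which is exactly phase retrievability for $\cC_f$.

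The only really substantive step, and the one where the non-vanishing hypothesis is used, is the division by $r(i)r(i-1)$: if $x$ or $y$ had a zero entry at some index $i_0$, then the sub-diagonal identity $\cH_1(i_0) = 0$ and $\cH_1(i_0+1) = 0$ would become trivial and would fail to propagate a phase relation across $i_0$, breaking the link between phases on either side of the zero. In the non-vanishing regime this obstruction disappears and the single sub-diagonal $\cH_1 \equiv 0$ is precisely enough to couple all phases into a single global constant, which explains why condition \eqref{eq:weakCond}, involving only $p = 0$ and $p = 1$, is sufficient.
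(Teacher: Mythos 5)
Your proposal is correct and follows essentially the same route as the paper: reduce to $H=xx^*-yy^*$ in the kernel of $\cA$, observe that the argument of Theorem \ref{theorem:GaborCond} together with the weakened hypothesis yields exactly $\cH_0=\cH_1=0$, and then use the non-vanishing of the entries to propagate the phase relation along the sub-diagonal. The only difference is cosmetic — you write the coupled phases in polar form while the paper introduces unimodular factors $\epsilon_i$ with $x(i)=\epsilon_i y(i)$ and cancels $y(i)\bar{y}(i-1)$ — so the two arguments are the same.
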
 
\begin{proof} Assume that \eqref{eq:weakCond} is satisfied, and that $x$ and $y$ are full vectors which are measured equally by the Gabor frame. Then $H:=xx^*-yy^*$  is in the kernel of $\cA$, since for every $\lambda \in \Z_N^2$ we have
\begin{align*}
\cA(xx^*-yy^*)(\lambda) = \sprod{g_\lambda, (xx^* - yy^*)g_\lambda}=  \abs{\sprod{g_\lambda,x}}^2-\abs{\sprod{g_\lambda,y}}^2 =0.
\end{align*} 
The proof of Theorem \ref{theorem:GaborCond} then implies that $\widehat{\cH}_p=0$ for $p =0$, $1$, i.e. that $\cH_0=\cH_1=0$. Remembering that $\cH_p(i)=H_{i,i-p}$, we arrive at
\begin{align*}
	0 = x(i)\bar{x}(i) - y(i)\bar{y}(i) = x(i)\bar{x}(i-1)-y(i)\bar{y}(i-1), \quad i=0, \ldots, N-1.
\end{align*}
The first equality simply says that $\abs{x(i)}=\abs{y(i)},$ i.e. that there exists numbers $\epsilon_i \in \bT$ so that $x(i)=\epsilon_i y(i)$ for all $i$. Inserting this into the second equation yields
\begin{align*}
	0= y(i) \bar{y}(i-1)(\epsilon_i \bar{\epsilon}_{i-1} -1).
\end{align*}
Since all entries of $y$ are assumed to be nonzero, it follows that $\epsilon_i=\epsilon_{i-1}$, i.e. $\epsilon_i=\epsilon_0 =: c\in \bT$ for all $i$. Hence $x=cy$ for a $c \in \bT$, and $x$ and $y$ are equal mod $\bT$.  
\end{proof}

\begin{remark} A similar result was proven in \cite{eldar2015sparse}. There, it was only assumed that $\sprod{g, g_{p,\ell}}\neq 0$ for $p=0$ and all $\ell \in \Z_N$. However, in this case further constraints on the generators need to be made: $g$ must be a window of length $W \geq 2,$ where $N \geq 2W-1$ and $N$ and $W-1$ are coprime. Our result, on the other hand, works for more general generators and any $N.$
\end{remark}
\end{subsection}

\begin{subsection}{Generators which allow phase retrieval}
 We will present two types of signals, one random and one deterministic, which satisfy condition \eqref{eq:GaborCond}, and thus can be used for phase retrieval of signals from all $N^2$  Gabor measurements.

\begin{subsubsection}{Complex random vectors as generators}
We start by considering a probabilistic approach, a common strategy in signal recovery in general.
\begin{proposition}
	Let $g$ be a vector in $\C^N,$  randomly distributed according to the complex standard normal distribution. Then, the condition $\sprod{g,g_\lambda}\neq 0$ for all $\lambda \in \Z_N^2$ is satisfied with probability $1$.
\end{proposition}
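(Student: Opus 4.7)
The plan is to exploit the fact that for each fixed $\lambda \in \Z_N^2$, the map $g \mapsto \sprod{g, g_\lambda} = \sprod{g, \Pi_\lambda g}$ is a polynomial in the real and imaginary parts of the entries of $g$, so its zero set has Lebesgue measure zero as soon as the polynomial is not identically zero. Then a union bound over the finitely many $\lambda \in \Z_N^2$ finishes the argument.

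First, I would write $\sprod{g, \Pi_{(p,\ell)} g} = \sum_{n \in \Z_N} \bar{g}(n) g(n-p) \omega^{\ell n}$, which is a (sesquilinear) polynomial in $g$, and thus a genuine polynomial $P_\lambda$ in the $2N$ real variables $(\operatorname{Re}g(n), \operatorname{Im}g(n))_{n \in \Z_N}$. The key step is then to check that $P_\lambda \not\equiv 0$ for every $\lambda \in \Z_N^2$. For $\lambda = (0,\ell)$, evaluating at $g = e_0$ gives $P_\lambda(e_0) = 1$. For $\lambda = (p, \ell)$ with $p \neq 0$, evaluating at $g = e_0 + e_p$ yields at least one nonzero term $\bar{g}(p)\omega^{\ell p} g(0) = \omega^{\ell p}$ (and the possible additional term $\bar{g}(2p)\omega^{2\ell p} g(p)$ vanishes unless $2p \equiv 0$ mod $N$, in which case one picks a slightly different test vector such as $e_0 + e_p + e_{2p}$, or more robustly uses $e_0 + \alpha e_p$ for a generic complex scalar $\alpha$). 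Thus $P_\lambda$ is a nontrivial polynomial.

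Next, I would invoke the standard fact that the zero set of a nontrivial real polynomial is a proper algebraic subvariety of $\R^{2N}$ and therefore has Lebesgue measure zero. Since the complex standard Gaussian measure on $\C^N \simeq \R^{2N}$ is absolutely continuous with respect to Lebesgue measure, the event $\{g : P_\lambda(g) = 0\}$ has probability zero. Finally, a union bound over the $N^2$ values of $\lambda \in \Z_N^2$ gives
\begin{align*}
\mathbb{P}\bigl(\exists\, \lambda \in \Z_N^2 : \sprod{g, g_\lambda} = 0\bigr) \leq \sum_{\lambda \in \Z_N^2} \mathbb{P}\bigl(P_\lambda(g) = 0\bigr) = 0,
\end{align*}
which is the claim.

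The only step that requires any thought is verifying that $P_\lambda$ is not identically zero, since the remainder is boilerplate measure theory; but as sketched above, a two-entry test vector handles all $\lambda \neq (0,0)$, and $\lambda = (0,0)$ is trivial since $\sprod{g, g} = \norm{g}^2$ vanishes only on a set of measure zero (in fact only at the origin).
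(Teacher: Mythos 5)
Your proof is correct, but it takes a genuinely different route from the paper's. You argue entrywise: $\sprod{g,\Pi_\lambda g}$ is a (complex-valued) polynomial in the $2N$ real coordinates of $g$, you exhibit test vectors showing it is not identically zero for each $\lambda$ (correctly flagging the degenerate case $2p\equiv 0 \bmod N$, where $e_0+e_p$ fails for odd $\ell$ but $e_0+\alpha e_p$ with $\operatorname{Im}\alpha\neq 0$ works), and then invoke the measure-zero property of proper algebraic varieties together with absolute continuity of the Gaussian. The paper instead exploits the structure of $\Pi_\lambda$: it is unitary, so it diagonalizes as $\sum_i c_i q_iq_i^*$ with $\abs{c_i}=1$; by rotation invariance the coefficients $h$ of $g$ in the eigenbasis are again standard complex Gaussian, whence $\sprod{g,\Pi_\lambda g}=\sum_i c_i\abs{h_i}^2$, and the vector $\left(\abs{h_i}^2\right)_i$ has a product $\chi^2_2$ density on $\R_+^N$, so it almost surely avoids the proper subspace $\set{v:\sum_i c_iv_i=0}$. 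The trade-off: the paper's argument needs no case analysis and no explicit test vectors, since properness of the subspace is immediate from $c_i\neq 0$, but it genuinely uses unitarity of $\Pi_\lambda$ and the rotation invariance of the complex Gaussian. Your argument is more elementary and strictly more general — it applies verbatim to any distribution on $\C^N$ absolutely continuous with respect to Lebesgue measure and to any family of sesquilinear forms that are not identically zero — at the cost of the slightly fiddly non-triviality check (and note that your fallback vector $e_0+e_p+e_{2p}$ degenerates to $2e_0+e_p$ precisely in the problematic case and still fails for odd $\ell$; the generic-$\alpha$ variant is the one to keep).
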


\begin{proof}
	Since there are only finitely many $\lambda$'s, it suffices to prove that $\sprod{g,\Pi_\lambda g}\neq 0$ with probability $1$ for one arbitrary $\lambda$. Since $\Pi_\lambda$ is a unitary operator, there exists an orthonormal basis $\left(q_i\right)_{i=1}^N$ of $\C^N$ and $c_i \in \bT$ with
	\begin{align*}
		\Pi_\lambda = \sum_{i=1}^N c_iq_iq_i^*.
	\end{align*}
	If we expand $g$ in this basis, i.e. $g= \sum_{i} h_i q_i$, then the vector $h \in \C^N$ will also be distributed according to  the complex standard normal distribution \cite{gallager08Gauss}. We have $\Pi_\lambda g = \sum_i c_i h_i q_i,$ and hence
	\begin{align*}
	 \sprod{g,\Pi_\lambda g} = \sum_{i =1}^N c_i \abs{h_i}^2.
	\end{align*}
	In order for $g$ to not satisfy \eqref{eq:GaborCond}, the random variable $\mathfrak{h}=\left(\abs{h_i}^2\right)_{i=1}^N$ on $\R^N_+$ must hence lie in the subspace of $\R^N$ defined by
	\begin{align*}
		\set{v : \sum_{i =1}^n c_i v_i = 0}.
	\end{align*}
	Since this space has dimension $N-1,$ the set has Lebesgue measure zero. If we prove that $\mathfrak{h}$ has a distribution which has a density with respect to the Lebesgue measure on $\R_+^N$ which is almost never zero, we are done. This is however not hard to see, since the variables $\abs{h_i}^2= \abs{a_i}^2+\abs{b_i}^2,$ $i=1,\ldots,N$ are independently distributed according to the $\chi^2_2$-distribution, which has density $\rho(x)=\frac{1}{2}\exp(-x/2)$ on $\R_+$. \end{proof}

\end{subsubsection}

\begin{subsubsection}{Difference sets as generators}
The second example are so-called difference sets, a construction coming from combinatorial design theory \cite{dinitz1992contemporary}. The set of all modulations of a characteristic function of difference set
was shown to achieve the Welch bound in \cite{xia2005achieving}. We will show that the set of all modulations and translations of a difference set has the property desired for phase retrieval.

\begin{definition}
	A subset $\cK = \{u_1,\ldots,u_K\}$ of $\Z_N$ is called an $(N,K,\nu)$ \textit{difference set} if the $K(K-1)$ differences
	$$(u_k -u_l) \mod N, \quad k \neq l$$
	take all possible nonzero values $1,2,\ldots,N-1,$ with each value appearing exactly $\nu$ times.
\end{definition}

\begin{example} \label{ex:dfs}
Let $N=7.$ The subset $\cK = \{ 1,2,4\}$ is then a $(7,3,1)$ difference set. We can check this by considering all
possible differences modulo $7$,

$$\begin{tabular}{c|ccc}
 - & 1 & 2 & 4  \\ \hline
 1 & - & 6 & 4  \\
 2 & 1 & - & 5  \\
 4 & 3 & 2 & -  
\end{tabular}$$
and confirming that indeed every value from $1$ to $6$ appears exactly one time.
\end{example}
Given  a difference set  $\cK$  with parameters $(N,K,\nu)$ we denote by $\chi_{\cK} \in \{0,1\}^N$ its
characteristic function:
$$ \chi_{\cK}(j) = \begin{cases}
      \hfill 1,    \hfill & \text{if } j \in \cK \\
      \hfill 0,   \hfill & \text{if } j \notin \cK.\\
      \end{cases}
$$
We now prove that if such characteristic functions are used as generators, the corresponding Gabor frames will satisfy \eqref{eq:GaborCond}, and hence allow phase retrieval for arbitrary signals.
\begin{proposition} \label{th:diffsets}
Let $N$ be an integer with a prime factorization $N=p_1^{a_1}\ldots p_r^{a_r}.$ Let $\cK$ be a difference set with parameters $(N,K,\nu),$ such that
\begin{equation} \label{eq:condl}
\nu, K < \min\{ p_1,\ldots, p_r\}.
\end{equation}
Then, for $g=\chi_\cK,$
\begin{equation}  \label{eq:ggl}
\langle g, g_\mu \rangle \neq 0 \text{ for every } \mu \in \Z_N^2.
\end{equation}
\end{proposition}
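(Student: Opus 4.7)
My plan is to compute the inner product $\langle g, g_\mu\rangle$ explicitly and recognize it as a short sum of $N$-th roots of unity whose length is governed by the difference-set parameters $K$ and $\nu$. Then I would invoke a theorem of Lam and Leung on vanishing sums of roots of unity.

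Concretely, for $\mu=(p,\ell)$ I would write
\begin{align*}
  \langle g,g_\mu\rangle \;=\; \sum_{j\in\Z_N}\overline{\chi_\cK(j)}\,\omega^{\ell j}\chi_\cK(j-p)
   \;=\; \sum_{j\in\, \cK\cap(\cK+p)} \omega^{\ell j}.
\end{align*}
Denoting $A_p := \cK\cap(\cK+p)$, I would observe that $|A_0|=K$, and that for $p\neq 0$ the cardinality $|A_p|$ equals the number of pairs $(u,v)\in\cK\times\cK$ with $u-v=p$, which by the defining property of an $(N,K,\nu)$ difference set is exactly $\nu\geq 1$. Thus $\langle g,g_\mu\rangle$ is always a sum of at most $\max(K,\nu)$ (and at least one) $N$-th roots of unity.

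The final step — and the real substance of the proof — is to show that such a short sum cannot vanish. For this I would appeal to the Lam--Leung theorem: if $p_1,\dots,p_r$ are the distinct primes dividing $N$, then every vanishing sum of $N$-th roots of unity has length in the additive semigroup generated by $\{p_1,\dots,p_r\}$. The hypothesis \eqref{eq:condl} ensures that both $K$ and $\nu$ are strictly less than every $p_i$, so the only element of that semigroup which is $\leq \max(K,\nu)$ is $0$. Since $|A_p|\geq 1$, no such vanishing is possible, and \eqref{eq:ggl} follows.

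The main obstacle, as far as I can see, is not the combinatorial identification $|A_p|=\nu$ (this is immediate from the definition) but rather the invocation of the Lam--Leung result, which is a non-trivial number-theoretic input; the hypothesis \eqref{eq:condl} is precisely calibrated so that this theorem applies. One could alternatively try a direct argument using the factorization of cyclotomic polynomials and the fact that $\Phi_N$ has degree $\varphi(N)$, but this would be more cumbersome and essentially reprove a special case of Lam--Leung.
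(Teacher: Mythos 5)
Your proposal is correct and follows essentially the same route as the paper: both compute $\langle g,g_\mu\rangle$ as a sum of $|\cK\cap(\cK+p)|$ many $N$-th roots of unity, identify this cardinality as $K$ (for $p=0$) or $\nu$ (for $p\neq 0$) via the difference-set property, and conclude non-vanishing from the Lam--Leung theorem together with hypothesis \eqref{eq:condl}. The only difference is cosmetic: you treat all cases uniformly through Lam--Leung, whereas the paper separately dispatches the easy cases $\nu=1$, $\mu=(0,j)$ and $\mu=(q,0)$ before invoking that theorem.
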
 
\begin{proof}
 Let $\mu =(q,j),$ with both $q,j\neq 0.$  By just using the definition of $g_\mu$ and $\cK$  we obtain
 \begin{equation} 
  \langle g, g_\mu \rangle  = \sum_{n \in \Z_N} g(n)  (M_j T_q g )(n) = \sum_{n \in \Z_N} g(n)g(n-q)
 \omega^{jn} = \sum_{\substack{n \in \cK \text{ and } \\ n-q \in \cK}} \omega^{jn}.
 \end{equation}
Now, taking into account the nature of a difference set, we can conclude that in the set
$$\{ n: n \in \cK, n-q  \in \cK \}$$
there will be always exactly $\nu$ elements (because for $q \in \Z_N$ there are exactly $\nu$ ways to be written as a difference of elements in $\cK,$ and $n-(n-q)$ are such differences). 

If $\nu=1,$ we are left with a single $\omega^{j n_0}$ and then certainly the sum is different from zero.

If $\nu\neq 1,$ we have a sum of $\nu$ different $N$-th roots of unity,  and we will show that with the given assumptions on the 
difference set, \eqref{eq:ggl} holds. 
We use the following result from \cite{lam2000vanishing} about the vanishing sums of roots of unity.
The main theorem in this article states that for any $N=p_1^{a_1}\ldots p_r^{a_r},$ the only possible amounts of $N$-th roots of unity that can sum up to zero is given by $M_1 p_1 + \ldots + M_r p_r.$ Here the $M_i$ are any non-negative
integers ($0$ is included). Now it is clear that the condition $\nu < \min \{p_1,\ldots p_r\}$ will ensure that we will never have a vanishing sum. 

If now $\mu = (0,j)$ the sum will go over the full set $\cK,$ and since $K < \min \{p_1,\ldots p_r\},$ again this sum is non vanishing.

Finally, in the last case $\mu = (q, 0),$ we have a sum of $\nu$ ones, and therefore we have proven \eqref{eq:ggl} for all cases $\mu \in \Z_N^2.$
\end{proof}

\begin{example}
 We now provide some examples of families of difference sets, which satisfy the condition from Proposition~\ref{th:diffsets}.
 
\textit{Family 1:}  Quadratic Difference Sets. Let $q=p^r = 3\, (\m 4)$ be a power of a prime and 
$$N=q, \, K=\frac{q-1}{2}, \, \nu= \frac{q-3}{4}.$$
Then $u = \{ t^2: t \in \Z_N\backslash \{ 0 \}\}$ is a $(N,K,\nu)$ difference set. If $r=1,$ condition \eqref{eq:condl} is satisfied.

\textit{Family 2:}  Quartic Difference Sets. Let $p=4a^2+1$ be a prime with $a$ odd, and 
$$ N=p,\, K= \frac{p-1}{4}, \, \nu = \frac{p-5}{16}.$$
Then $u= \{ t^4: t \in \Z_N\backslash \{ 0 \}\}$ is a $(N,K,\nu)$ difference set and additionally $K,\nu < N.$ 

Many other examples can be found in the paper \cite{xia2005achieving}, or in the La Jolla Difference Set Repository at \url{http://www.ccrwest.org/ds.html}.
\end{example}
\end{subsubsection}

\end{subsection}

\begin{subsection}{Generators which do not allow phase retrieval} 
We now consider two cases for which condition \eqref{eq:GaborCond} is \emph{not} satisfied, and show that this in fact implies that the Gabor frames do not allow phase retrieval in these cases.
\label{subseq:negativeExamples} 

\begin{proposition} \label{prop:negex}
Let $g \in \C^N$ be a generator such that one of the following two conditions is satisfied
\begin{align} 
\sprod{g,g_{\hat{p},\ell}} & = 0, \quad \text{ for fixed } \hat{p} \in \Z_N \backslash \{0 \} \text{ and all } \ell \in \Z_N. \label{eq:condg1} \\
\sprod{g, g_{\hat{p},\ell}}& = 0, \quad \text{  for } \hat{p}=0 \text{ and all } \ell \in \Z_N \backslash \{0 \}. \label{eq:condg2}
\end{align}
Then, the corresponding Gabor frame $G = \left( g_\lambda \right)_{\lambda \in \Z_N^2}$ does not allow phase retrieval for $\C_N.$
\end{proposition}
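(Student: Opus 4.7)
My plan is to invoke Theorem \ref{th:PRfull} and, in each of the two cases, exhibit an explicit non-zero Hermitian matrix of rank at most $2$ lying in the kernel of the PhaseLift operator $\cA$. The key observation that makes the construction easy is already implicit in the proof of Theorem \ref{theorem:GaborCond}: since the map $H \mapsto N\cA(H)$ amounts to two successive discrete Fourier transforms of the matrix $(w^p(\ell))_{p,\ell}$ with $w^p(\ell) = \widehat{\cH}_p(\ell)\sprod{g,g_{p,\ell}}$, and the DFT is invertible, we have the equivalence
$$H \in \ker \cA \iff \widehat{\cH}_p(\ell)\sprod{g,g_{p,\ell}} = 0 \quad \text{for every } (p,\ell) \in \Z_N^2.$$
I would begin the proof by recording this reformulation, then choose $H$ band-by-band so that for every $(p,\ell)$ either the Fourier coefficient $\widehat{\cH}_p(\ell)$ or the inner product $\sprod{g,g_{p,\ell}}$ vanishes.

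For case \eqref{eq:condg1}, I would fix any $i_0 \in \Z_N$ and take
$$H \;=\; e_{i_0}e_{i_0-\hat p}^* + e_{i_0-\hat p}e_{i_0}^*.$$
Since $\hat p \neq 0$, the two summands are linearly independent, so $H$ is Hermitian of rank exactly $2$. Its only non-zero bands are $\cH_{\hat p}$ and $\cH_{-\hat p}$, so the kernel condition only needs to be checked for $p=\pm\hat p$. The case $p=\hat p$ is exactly \eqref{eq:condg1}; the case $p=-\hat p$ follows from the elementary identity
$$\sprod{g,g_{-\hat p,\ell}} \;=\; \omega^{-\hat p\ell}\,\overline{\sprod{g,g_{\hat p,-\ell}}},$$
obtained by the substitution $n \mapsto n+\hat p$ in the defining sum.

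For case \eqref{eq:condg2}, I would pick two distinct indices $i_0,i_1 \in \Z_N$ and take
$$H \;=\; e_{i_0}e_{i_0}^* - e_{i_1}e_{i_1}^*,$$
a diagonal, traceless, Hermitian matrix of rank $2$. Its only non-zero band is $\cH_0$, whose Fourier transform $\widehat{\cH}_0(\ell)=\omega^{-\ell i_0}-\omega^{-\ell i_1}$ vanishes at $\ell=0$; for every $\ell\neq 0$ the factor $\sprod{g,g_{0,\ell}}$ is zero by \eqref{eq:condg2}. Hence the product is zero everywhere, and $H\in\ker\cA$.

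The main obstacle I foresee is the bookkeeping in Case 1: verifying the conjugate-symmetry of the inner products $\sprod{g,g_{\pm\hat p,\ell}}$ so that a hypothesis on the single slice $\hat p$ simultaneously kills both affected bands of $H$. Once that identity is established, Theorem \ref{th:PRfull} immediately produces the desired failure of phase retrieval in both cases.
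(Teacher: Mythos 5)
Your proposal is correct and follows essentially the same route as the paper: the paper's proof also reduces membership in $\ker\cA$ to the vanishing of $\widehat{\cH}_p(\ell)\sprod{g,g_{p,\ell}}$ for all $(p,\ell)$ and exhibits the rank-$2$ witnesses $H_1=e_0e_{-\hat p}^*+e_{-\hat p}e_0^*$ and $H_2=e_0e_0^*-e_1e_1^*$, i.e.\ your matrices with $i_0=0$, $i_1=1$, using the same conjugate-symmetry identity for $\sprod{g,g_{\pm\hat p,\ell}}$ in Case 1.
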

\begin{proof}
Let us first assume that condition \eqref{eq:condg1} is satisfied. We consider the matrix $H_1 \in \bH^{N \times N}$, defined by
\begin{align*}
	H_1 = e_0 e_{-\hat{p}}^* + e_{-\hat{p}}e_0^*
\end{align*}  
($e_0$ is the 'first unit vector' - remember that we are always considering indices from $\Z_N$). This matrix has rank 2, and it lies in the kernel of the PhaseLift operator associated to the Gabor frame defined in \eqref{eq:opA}. To see this, note that using the notation of the proof of Theorem \ref{theorem:GaborCond} we have
\begin{align*}
	\cH_p(i)= H_{i,i-p} = \begin{cases}
	1 &\text{ if } i=0, p= \hat{p}, \\
	1 &\text{ if } i={-\hat{p}}, p= -\hat{p}, \\
	0 & \text{ else.}
	\end{cases}
\end{align*}
In other words, $\cH_p =0$ for all $p \neq \pm \hat{p}$. Since $\sprod{g,g_{p, \ell}} = \omega^{-\ell p}\overline{\sprod{g,g_{-p, -\ell}}}$, equation \eqref{eq:condg1} also implies $\sprod{g, g_{-\hat{p},\ell}}=0$ for all $\ell \in \Z_N$. These two facts prove that  
$$\widehat{\cH}_p(\ell)\sprod{g,g_{p, \ell}}=0$$ 
for all $\ell$ and $p$. Using the technique of the proof of Theorem \ref{theorem:GaborCond} backwards, it follows $\cA(H_1)=0$. The matrix $H_1$ that we have found has rank $2$ and it is in the kernel of $\cA.$ Therefore, by Theorem \ref{th:PRfull}, the Gabor frame can not allow phase retrieval. 

Now we assume that \eqref{eq:condg2} is satisfied. In this case we define a rank 2 matrix in $\bH^{N \times N}$ by \begin{align*}
	H_{2}= e_0e_0^* - e_1e_1^*.
\end{align*}
For this matrix, $\cH_p=0$ for $p \neq 0$. Also $\widehat{\cH}_{0}(0)= \sum_i H_{i,i}=0$. Because of these two facts and the assumption on $g$, we again have
$$\widehat{\cH}_p(\ell)\sprod{g,g_{p, \ell}}=0 \quad \text{ for all} \ (p, \ell) \in \Z_N^2,$$ 
and $H_2$ will by the same argument as before be in the kernel of $\cA$. Phase retrieval is again not possible.
\end{proof}
\begin{figure}
\begin{align*}
	H_1 = \begin{bmatrix}
	  0&  &   1 &  &  \\
	  & \ddots  &   &  &   \\
	1 &  &  \ddots   &  &   \\ 
	  &  &    &  \ddots &   \\
	 &  &     &  &   0
	  	\end{bmatrix} \quad 	H_2 = \begin{bmatrix}
	  1&  &    &  &  \\
	  & -1 &   &  &   \\
	 &  &  0  &  &   \\ 
	  &  &    &  \ddots &   \\
	 &  &     &  &   0
	  	\end{bmatrix}
\end{align*} 
\caption{The matrices $H_1$ and $H_2$ used in the proof of Proposition \ref{prop:negex}}
\end{figure}

\begin{example}
We now give two examples, for which the conditions of the previous proposition are satisfied. 

\textit{ Short windows:} The condition \eqref{eq:condg1} is satisfied if the generator $g$ is a ``short window''. More precisely, if $\supp g \sse [K_1,K_2]$ for $\abs{K_1-K_2}< \frac{N}{2},$ then $g$ and $M_\ell T_pg$ will have disjoint supports for some $p$'s and hence have a vanishing scalar product. Using a window as a generator is a core idea in short-time Fourier analysis~\cite{pfander2013gabor}.

\textit{Alltop sequence:} It can be easily shown that the much celebrated Alltop sequence \cite{alltop1980complex}, defined as 
$\big( \frac{1}{\sqrt{N}} \omega^{n^3} \big)_{n=0}^{N-1},$ has the property \eqref{eq:condg2}. This generator is often and successfully used in sparse signal recovery from linear Gabor measurements \cite{bajwa2010gabor, pfander2013gabor}.

However, both these families of signals can not be used for phase retrieval, when we are interested in recovery of all signals in $\C^N.$ 
\end{example}

\end{subsection}

\end{section}

\begin{section}{An injectivity condition for sparse signals} \label{sec:sparse}
\begin{subsection}{Sparse phase retrieval via the PhaseLift operator}
We will now consider signals which are sparse in a dictionary. A dictionary $D$ is a set of $d$ vectors in $\C^N$, and it is identified with the matrix formed when writing the $d$ vectors as its columns. The class of signals which are $k$-sparse in the dictionary $D$, or simply $kD$-sparse signals, is
\begin{align*}
	\mathcal{C}_{k,D} = \set{x \in \C^N \vert \ \exists \ z \in \C^d,\norm{z}_0\leq k, \text{ s.t. } x=Dz},
\end{align*}
where $\norm{z}_0$ denotes the number of non-zero coefficients in $z$. If $D=I$, we will  omit the dictionary and simply speak of $k$-sparse vectors $\mathcal{C}_k$. We will also, to increase readability, speak of $kD$-phase retrieval instead of phase retrieval for $\mathcal{C}_{k,D}.$

Since sparse vectors in some sense are $k$-- and not $N$-dimensional, one would hope that the number of measurements required to recover them is smaller (for us, $\Lambda$ should contain less elements.) This, and other, questions were considered and answered for general measurement vectors in \cite{ohlsson2014conditions} and \cite{wang2014phase}. 
A counterpart of Theorem \ref{th:PRfull} in the sparse setting has up to know not been stated and proved.  We will prove now an injectivity condition for sparse signals, and then use it for the case of Gabor measurements as in the previous section.

 For a given dictionary $D=\left( d_i \right)_{i=1}^d$ and a set of indices $\cK \sse \{ 1, \dots d \},$ 
we denote $W_\cK =\Span \{ d_i \}_{i \in \cK}.$
With the help of this notion, we can characterize sets or measurement vectors which allow $kD$-phase retrieval. Let $\cA$ be the PhaseLift operator defined in \eqref{eq:opA}.

\begin{theorem} \label{th:PhasLiftSparse}
	Given the notations from above, the following two statements hold.
	\begin{enumerate}
		\item If for every $\cK$ with $\abs{\cK}=2k$, the kernel of $\mathcal{A}$ does not contain rank $1$ or $2$ Hermitian matrices whose range is in $W_\cK,$  then the vectors $\left(f_i\right)_{i=1}^m$ allow $kD$-phase retrieval.
		\item If $\left(f_i\right)_{i =1}^m$ is allowing $kD$-phase retrieval,  then for every $\cK$ with $\abs{\cK}=k$, the kernel of $\mathcal{A}$ does not contain rank $1$ or $2$ Hermitian matrices with range in $W_\cK$.
	\end{enumerate}
\end{theorem}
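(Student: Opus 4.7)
Both directions hinge on the standard observation that $\cA(xx^* - yy^*) = 0$ if and only if $\cM_F(x) = \cM_F(y)$, together with the fact that sparsity of $x$ and $y$ translates into a range-inclusion for the Hermitian matrix $xx^* - yy^*$ (and vice versa). Part (1) will mirror the classical argument behind Theorem~\ref{th:PRfull}, while part (2) requires a careful split into cases depending on the signature of the problematic matrix.

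For part (1), I would argue by contrapositive. Suppose phase retrieval for $\mathcal{C}_{k,D}$ fails; then there exist $x, y \in \mathcal{C}_{k,D}$ with $x \not\sim y$ mod $\bT$ yet $\cM_F(x) = \cM_F(y)$. Consequently $H := xx^* - yy^*$ is a nonzero Hermitian matrix of rank $1$ or $2$ in $\ker \cA$. Writing $x = Dz_x$ and $y = Dz_y$ with $\norm{z_x}_0, \norm{z_y}_0 \le k$, the range of $H$ lies in $\Span(x, y) \subseteq W_{\cK_0}$, where $\cK_0 := \supp z_x \cup \supp z_y$ has cardinality at most $2k$. Padding $\cK_0$ to any $\cK \supseteq \cK_0$ with $\abs{\cK} = 2k$ produces precisely a matrix forbidden by the hypothesis, a contradiction.

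For part (2), I would suppose, for contradiction, that some $\cK$ with $\abs{\cK} = k$ admits a Hermitian $H \in \ker \cA$ of rank $1$ or $2$ with range in $W_\cK$, and then spectrally decompose $H = \lambda_1 u_1 u_1^* + \lambda_2 u_2 u_2^*$ with orthonormal $u_i$ in the range of $H$ (hence $u_i \in W_\cK$), allowing one $\lambda_i$ to vanish when the rank is $1$. The proof splits on the signs of $\lambda_1, \lambda_2$. If both are non-negative, set $a_i := \sqrt{\lambda_i}\, u_i \in W_\cK$; positivity then forces $0 = \sprod{f_j, H f_j} = \abs{\sprod{a_1, f_j}}^2 + \abs{\sprod{a_2, f_j}}^2$, hence $\sprod{a_i, f_j} = 0$ for all $i, j$. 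Since $H \neq 0$, at least one $a_i$ is a nonzero $k$-sparse vector satisfying $\cM_F(a_i) = \cM_F(0)$, violating injectivity on $\mathcal{C}_{k,D}$. The mixed-sign case yields $H = aa^* - bb^*$ with nonzero $a, b \in W_\cK$, so $\cM_F(a) = \cM_F(b)$ for $k$-sparse $a \not\sim b$ (otherwise $H$ would vanish); the negative semidefinite case follows by applying the PSD argument to $-H$.

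The only delicate point is the positive semidefinite branch of part (2): one is tempted to represent every rank-at-most-$2$ Hermitian matrix as $xx^* - yy^*$, but this is impossible when both eigenvalues are strictly positive, since $xx^* - yy^*$ can never carry two strictly positive eigenvalues (for instance, the $2 \times 2$ identity admits no such representation). The workaround is to exploit the positivity of $\sprod{f_j, H f_j}$ directly on the sum-of-rank-one decomposition, producing the degenerate counterexample $(a_i, 0)$ to injectivity on $\mathcal{C}_{k,D}$ rather than a pair of nonzero sparse signals.
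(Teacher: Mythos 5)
Your proof is correct and follows essentially the same route as the paper: part (1) by contraposition via the rank-$\le 2$ Hermitian matrix $xx^*-yy^*$, whose range lies in $W_{\supp z_x\cup\supp z_y}$, and part (2) by spectral decomposition with a case split on the signs of the eigenvalues, including the same observation that a positive (or negative) semidefinite $H$ must be compared against the zero signal rather than written as $xx^*-yy^*$. The only step you assert without justification is that $xx^*-yy^*\neq 0$ whenever $x\not\sim y$ mod $\bT$; the paper spells out the short argument ($xx^*=yy^*$ forces $x=\lambda y$ with $\abs{\lambda}=1$), but this is a routine verification.
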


\begin{proof}
	Let us start by proving $(1)$ by contraposition. Assume that $\left(f_i\right)_{i =1}^m$ is not allowing $kD$-phase retrieval. Then there exists $x \neq y \text{ mod } \mathbb{T}$, both $kD$-sparse, for which 
$$\sprod{ x x^{*},f_if_i^{*}}_{HS}=\abs{\sprod{f_i, x}}^2=\abs{\sprod{f_i, y}}^2=\sprod{ y y^{*}, f_if_i^{*}}_{HS},$$
 i.e. $xx^{*}-yy^{*}$ is a Hermitian matrix in the kernel of $\mathcal{A}$. If the sparse representations of $x$ and $y$ are given by $x= Dz_x$ and $y=Dz_y$, we see that $\text{ran}(xx^{*}-yy^{*}) \sse (W_{\supp z_x \cup \supp z_y})$ and further it has rank less than or equal to $2$. If we knew that the rank is at least one, $\abs{\supp z_x \cup \supp z_y}\leq 2k$ would imply the claim.
	
	To see that the condition $x \neq y \text{ mod } \mathbb{T}$ in fact implies this, assume, towards a contradiction, that this is not the case, i.e that $xx^*-yy^*=0.$ Since $x \neq y \text{ mod } \mathbb{T},$ both vectors are non-zero. Hence there exists a vector $v \in \C^N$ such that $\sprod{x,v}\neq 0.$ Multiplying $0=xx^*-yy^*$ with this $v$ and rearranging terms, we arrive at 
$$x=\frac{\sprod{y,v}}{\sprod{x,v}}y,$$
 i.e $x= \lambda y$ for a $\lambda \in \C.$ Again plugging this into  $0=xx^*-yy^*$ yields $\abs{\lambda}=1$. This is a contradiction. 
	
	Let us now turn to $(2)$. Suppose that there exists a $\cK$ such that the kernel $\mathcal{A}$ contains a Hermitian matrix $H$ with rank $1$ or $2$ with range in $W_\cK$. By the spectral theorem, there exists an orthonormal basis $(\varphi_j)$ of $\C^N$ consisting of eigenvectors of $H$, corresponding to real eigenvalues $(\lambda_j)$. It is clear that $H$ may be written as $\sum_j \lambda_j \varphi_j \varphi_j^{*}$.
	
	Because of the bounded rank and the fact that $H\neq 0$, either one or two of the eigenvalues are non-zero. It is clear that the eigenvectors corresponding to those eigenvalues  are vectors in $W_\cK$, since they form a basis of the range of $H$. Thus, they are $kD$-sparse. 
	
	Let us first consider the case where only one eigenvalue is different from zero. If we write $x= \sqrt{\abs{\lambda_1}}\varphi_1,$ then we have $H= \pm xx^{*}$ and hence
	\begin{align*}
		0 = \sprod{x x^{*}, f_if_i^{*}}= \abs{\sprod{f_i,x}}^2=0.
	\end{align*}
	This means that the two $kD$-sparse vectors $x$ and $0$ have the same phaseless measurements, although $x \neq 0 \mod \mathbb{T}$.
	
	The other case is dealt with similarly: here we write $x= \sqrt{\abs{\lambda_1}}\varphi_1$, $y = \sqrt{\abs{\lambda_2}}\varphi_2$ and conclude that $H = \pm xx^{*} \pm yy^{*}$, where the signs depend on the signs of the eigenvalues. If the signs are equal, we see that $\abs{\sprod{f_i,x}}^2 + \abs{\sprod{f_i,y}}^2 =0$ and we have again found $kD$-sparse vectors which are measured $0$. If the signs are not equal, we see that $\abs{\sprod{f_i,x}}^2 - \abs{\sprod{f_i,y}}^2 =0,$ and hence $x$ and $y$ are measured equally. They are $kD$-sparse and cannot be equal $\text{mod } \mathbb{T}$, since they are orthogonal.
\end{proof}

\end{subsection}

\begin{subsection}{Signals sparse in the standard basis}
Let us start by considering vectors which are sparse in the standard sense, i.e. $D=I$. We will prove a condition under which a subset of our Gabor frame $\set{g_\lambda\ , \ \lambda \in \Z_N^2}$ with $\sim k^3$ elements allows $k$-sparse phase retrieval, when $N$ is prime. For general $N$, it would still be possible to go below the full set of measurements, $N^2.$ We will need a special form of the discrete uncertainty principle, which involves the sum of the ``spread'' of the signal and its Fourier transform. Let us start with a general observation.
\begin{lemma} \label{lemma:uncertaintyPrinciple}
	Assume that for all non-zero vectors $f \in \C^N$ 
	\begin{align} \label{eq:uncertainty}
		\norm{f}_0 + \Vert \hat{f}\Vert_0 \geq N - \theta_N
	\end{align}
	holds for some number $\theta_N.$ Then, if $f$ is $k$-sparse ($\norm{f}_0=k$), and
	$\hat{f}$ has not less than $\theta_N+k+1$ zero-entries, then $f$ necessarily has to vanish.
\end{lemma}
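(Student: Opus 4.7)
The plan is to prove this by direct contradiction, invoking the hypothesized uncertainty inequality \eqref{eq:uncertainty} against the two sparsity assumptions on $f$ and $\hat{f}$. The lemma is really just a restatement of the uncertainty principle in contrapositive form, so I expect no subtle step at all.

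First, I would suppose towards a contradiction that $f \neq 0$. Then hypothesis \eqref{eq:uncertainty} is applicable and yields
\begin{align*}
\|f\|_0 + \|\hat{f}\|_0 \geq N - \theta_N.
\end{align*}
On the other hand, the assumption $\|f\|_0 = k$ together with the bound on the number of zero entries of $\hat f$ gives
\begin{align*}
\|\hat f\|_0 \leq N - (\theta_N + k + 1) = N - \theta_N - k - 1,
\end{align*}
so adding these,
\begin{align*}
\|f\|_0 + \|\hat f\|_0 \leq k + (N - \theta_N - k - 1) = N - \theta_N - 1,
\end{align*}
which contradicts the previous inequality. Hence $f = 0$, as claimed.

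The only step requiring any care is matching the arithmetic precisely: one must note the strict inequality $N-\theta_N-1 < N-\theta_N$, which is exactly what creates the contradiction. Since no part of the argument depends on the specific value of $\theta_N$ or on any structure beyond \eqref{eq:uncertainty} itself, I do not anticipate an obstacle; the lemma is essentially tautological given the stated uncertainty principle, and its purpose is to package this reformulation for convenient reuse later in the sparse phase retrieval argument.
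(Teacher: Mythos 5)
Your argument is correct and is precisely the contradiction argument the paper has in mind (the paper simply states that the lemma ``follows immediately by contradiction'' without writing out the arithmetic). The bookkeeping $\norm{f}_0 + \Vert\hat f\Vert_0 \leq k + N - \theta_N - k - 1 = N - \theta_N - 1 < N - \theta_N$ is exactly the intended step, so there is nothing to add.
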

This statement follows immediately by contradiction. The question is whether \eqref{eq:uncertainty} is a reasonable assumption. In \cite{tao03uncertainty} it is proved that when $N$ is prime, \eqref{eq:uncertainty} holds with $\theta_N$ equal to $-1.$ For general $N,$ by the standard multiplicative uncertainty principle and the geometric mean-arithmetic mean inequality, one can derive \eqref{eq:uncertainty} with $\theta_N = N-2\sqrt{N}.$ A more involved inequality for general $N$ was obtained in \cite{meshulam2006uncertainty} and will be discussed later on.

Before we proceed with a condition on the generator $g$ for sparse phase retrieval, we will first prove a more general statement about recovery of sparse matrices from \textit{linear} measurements, which is interesting on its own. We will be interested in the following class of signals,
\begin{align*}
	\mathfrak{H}_K = \set{ H \in \C^{N \times N} : \exists \cK \sse [1, \dots N], \abs{\cK}=K :  H_{ij}= 0 \text{ if } 
	(i,j) \notin \cK \times \cK}.
\end{align*}

\begin{theorem} \label{th:GaborKsparseOperatorVersion}
	Let $N$ be such that the uncertainty principle \eqref{eq:uncertainty} holds, and let $\lambda=(p,l) \in \Z_N^2.$ Let $g$ have the following property: for each $\ell$, the sequence $c_p=(\sprod{g,g_{p,\cdot}})$ formed by letting $\ell$ run obeys
	\begin{align} \label{eq:GaborKsparseCond}
		\theta_N+K+1 \leq \norm{c_p}_0 \leq \hat{k}
	\end{align}
	for some $K$ and $\hat{k}.$ Then, for any subsets $A \sse \Z_N$, $B \sse \Z_N$ with 
	\begin{align*}
	 \abs{A} \geq \theta_N+\hat{k}+1, \quad	 \abs{B} \geq \theta_N + K^2-K+2,
	 \end{align*}
	the following holds. If a matrix $H \in \mathfrak{H}_K$ satisfies
	$(\sprod{g_\lambda g_\lambda^*, H}_{HS})_{\lambda \in A \times B} = 0,$ then $H=0.$
\end{theorem}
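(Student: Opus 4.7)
The plan is to adapt the two-dimensional Fourier-analytic identity \eqref{eq:GaborPLisFFT} from the proof of Theorem \ref{theorem:GaborCond}, but to push the argument further by cascading three applications of Lemma \ref{lemma:uncertaintyPrinciple}, each exploiting a different piece of sparsity. Throughout, identify $\lambda=(p,\ell) \in A\times B$ with $p$ the translation index and $\ell$ the modulation index, and reuse the notation
\begin{align*}
V^q(p) = \sum_\ell \omega^{\ell q} w^p(\ell), \qquad w^p(\ell) = \widehat{\cH}_p(\ell)\sprod{g,g_{p,\ell}}, \qquad \cH_p(i)=H_{i,i-p}
\end{align*}
from that proof, and recall that \eqref{eq:GaborPLisFFT} says $N\sprod{g_{(q,j)},Hg_{(q,j)}}=\widehat{V^q}(j)$.

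The first step is a structural observation about $\mathfrak{H}_K$: if $H$ is supported on $\mathcal{K}\times \mathcal{K}$ with $\abs{\mathcal{K}}=K$, then $\cH_p$ is supported on $\mathcal{K}\cap(\mathcal{K}+p)$, so it is $K$-sparse and vanishes identically unless $p \in \mathcal{K}-\mathcal{K}$. Consequently $V^q$ is supported on $\mathcal{K}-\mathcal{K}$ and hence $(K^2-K+1)$-sparse, uniformly in $q$.

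The second step is the first use of uncertainty. Fix $q \in A$. The vanishing hypothesis gives $\widehat{V^q}(j)=0$ for every $j \in B$, so $\norm{\widehat{V^q}}_0\leq N-\abs{B}$. Combined with $\norm{V^q}_0 \leq K^2-K+1$ and $\abs{B}\geq \theta_N+K^2-K+2$, the inequality $\norm{f}_0+\norm{\hat f}_0\geq N-\theta_N$ underlying Lemma \ref{lemma:uncertaintyPrinciple} forces $V^q\equiv 0$ for each $q \in A$. The third step repeats this idea one level down: from $V^q(p)=0$ for every $q\in A$ we read off that a Fourier-type transform of $w^p$ vanishes on a set of size $\abs{A}$; since $w^p$ is supported on $\supp c_p$ and hence at most $\hat k$-sparse, the same uncertainty argument together with $\abs{A}\geq\theta_N+\hat k+1$ forces $w^p\equiv 0$ for every $p$.

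The final step closes the loop: $w^p\equiv 0$ means $\widehat{\cH}_p$ vanishes on $\supp c_p$, a set of cardinality at least $\theta_N+K+1$, and since $\cH_p$ is already $K$-sparse, a direct application of Lemma \ref{lemma:uncertaintyPrinciple} yields $\cH_p=0$ for every $p$, hence $H=0$. The main obstacle I expect is bookkeeping: matching, at each of the three stages, the correct sparsity count to the correct Fourier-type transform, and being careful that the vanishing of $\widehat{V^q}$ on $B$ and of $\widehat{w^p}$ on (a sign-flip of) $A$ really do play the roles of the two sides of \eqref{eq:uncertainty}. Once the three uncertainty inequalities are correctly lined up, each is a two-line computation, and the proof amounts to a structural refinement of the one already given for Theorem \ref{theorem:GaborCond}.
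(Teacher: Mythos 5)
Your proposal is correct and follows essentially the same route as the paper's own proof: the same structural observations about $\cH_p$ (that it is $K$-sparse and vanishes unless $p\in\cK-\cK$), followed by the same three cascading applications of the uncertainty principle to $V^q$, $w^p$, and $\cH_p$ in that order, with the cardinalities of $B$, $A$, and $\supp c_p$ playing exactly the roles you assign them. The bookkeeping you flag as the main obstacle is indeed all there is, and it lines up as you describe.
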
 

\begin{proof}
Let $H \in\mathfrak{H}_K$ satisfy $\sprod{g_\lambda g_\lambda^*, H}_{HS}=0$ for $\lambda \in A \times B$ and let $\cK$ be such that  $H_{ij}= 0$ if $(i,j) \notin \cK \times \cK$. We will prove that $H$ then must be $0.$ Recall the notation from the proof in Theorem \ref{th:PRfull},
$\cH_p(i) = H_{i,i-p}.$ Since $H_{i,i-p}$ is zero, if $(i,i-p)$ is not in $\cK \times \cK,$ we can conclude that
	\begin{align*}
		\cH_p(i) = H_{i,i-p} = 0 \quad \text{ if } i \notin \cK \cap (\cK + p).
	\end{align*}
	This  proves the following properties: 
	\begin{enumerate}
		\item \label{HlkSparse} The vectors $\cH_p$ are $K$-sparse.
		\item \label{HlZeroForManyL} $\cH_p$ is zero for all but at most $K^2-K+1$ different values for $p$. To see this, notice first that $\cH_p =0$ if $p \notin \cK - \cK$. This is because if $\cH_p(i)\neq 0$, then $i \in \cK$ and there additionally exists a $j\in \cK$ with $i=j+p$. It follows $p=i-j \in \cK-\cK$. And we know that the set $\cK-\cK$ has at most $\abs{\cK}(\abs{\cK}-1) +1 = K^2-K+1$ elements. 
	\end{enumerate} 
	
	Now using the same argument as in the proof of Theorem \ref{theorem:GaborCond}, we arrive at
	\begin{align} \label{eq:FourTransAtEta}
		0 = N\sprod{g_\lambda g_\lambda^*, H} = \sum_{p, \ell}\omega^{-jp}\omega^{\ell q} \widehat{\cH}_p(\ell)\sprod{g,g_{p, \ell}} \quad \text{ for all} \ \lambda = (q, j) \in A \times B .
	\end{align}
	Fixing $j$, the sum in  \eqref{eq:FourTransAtEta} is the value at $j$ of the discrete Fourier transform of the vector $V^q$ defined as
	\begin{align} \label{eq:Vq}
		V^q(p)= \sum_{\ell} \omega^{\ell q}\widehat{\cH}_p(\ell)\sprod{g,g_{p, \ell}}.
	\end{align}
	Because of (\ref{HlZeroForManyL}), these vectors are all $(K^2-K+1)$-sparse. Further, \eqref{eq:FourTransAtEta} proves that their Fourier transforms vanish at all $j\in B,$ i.e. at $\theta_N+(K^2-K+2)$ points. The discrete uncertainty principle \eqref{eq:uncertainty} implies that $V^q$ must equal zero.
	
	Considering \eqref{eq:Vq}, the fact that $V^q(p)=0$ proves that the inverse Fourier transform of the vector, which we denote by
	\begin{align*}
		w^p(\ell)=\widehat{\cH}_p(\ell)\sprod{g, g_{p,\ell}}
	\end{align*}	 
	vanishes at the values $q\in A$, i.e. at $\theta_N+\hat{k}+1$ values. Because of our assumption on $g$, $w^p$ is however $\hat{k}$-sparse. We can therefore again conclude that
	\begin{align*}
		\widehat{\cH}_p(\ell) \sprod{g, g_{p,\ell}} = 0 \quad \text{ for all} \ (p, \ell)\in \Z_N^2.
	\end{align*}
	Hence, if $\sprod{g, g_{p,\ell}} \neq 0$, $\widehat{\cH}_p(\ell)$ must be $0$. Due to our assumption on $g$, this happens for at least $\theta_N+2k+1$ $\ell$'s for every $p$. Because of \ref{HlkSparse}, this is sufficient to prove that $\cH_p =0$ for all $p$, and $H$ therefore must be~$0$.
\end{proof}

We now use the theorem we have just proved, to provide a condition, when a Gabor frame can do $k$-sparse phase retrieval. 
\begin{theorem} \label{theorem:GaborKsparseCond}
Let $N$ be such that the uncertainty principle \eqref{eq:uncertainty} holds, and let $\lambda=(p,l) \in \Z_N^2.$ Let $g \in \C^N$ be a generator which satisfies the following condition: for each $\ell$, the sequence $c_p=(\sprod{g,g_{p,\cdot}})$ formed by letting $\ell$ run obeys
	\begin{align} \label{eq:GaborKsparseCond}
		\theta_N+K+1 \leq \norm{c_p}_0 \leq \hat{k}
	\end{align}
	for some $K=2k$ and  some $\hat{k}.$ Then, for any subsets $A \sse \Z_N$, $B \sse \Z_N$ with 
	\begin{align*}
	 \abs{A} \geq \theta_N+\hat{k}+1, \quad	 \abs{B} \geq \theta_N + (2k)^2-2k+2,
	 \end{align*}
	 the set
\begin{align} \label{eq:GaborSubSystem}
		\set{g_\lambda, \,  \lambda \in A \times B}
	\end{align}
	allows $k$-sparse phase retrieval.
\end{theorem}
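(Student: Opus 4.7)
My plan is to combine Theorem~\ref{th:PhasLiftSparse}(1) with Theorem~\ref{th:GaborKsparseOperatorVersion}. Since sparsity is measured in the standard basis ($D=I$), the subspaces $W_\cK$ associated to $\cK \sse \Z_N$ are simply the coordinate subspaces $\Span\{e_i : i \in \cK\}$, which is exactly the kind of structure singled out by the class $\mathfrak{H}_K$, so the two theorems should mesh cleanly.

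The first preparatory step is to observe that any Hermitian matrix $H$ whose range lies in $W_\cK$ automatically has support contained in $\cK \times \cK$. Indeed, the range condition gives $H_{ij} = (He_j)_i = 0$ whenever $i \notin \cK$, and Hermiticity then forces $H_{ij} = \overline{H_{ji}} = 0$ whenever $j \notin \cK$ as well. Consequently every such $H$ belongs to $\mathfrak{H}_{|\cK|}$.

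With this in hand I would set $K = 2k$ and apply Theorem~\ref{th:PhasLiftSparse}(1): it reduces the claim to showing that, for every $\cK$ of size $2k$, the kernel of the PhaseLift operator $\cA$ built from the Gabor subsystem $\{g_\lambda : \lambda \in A \times B\}$ contains no rank-$1$ or rank-$2$ Hermitian matrix with range in $W_\cK$. By the previous paragraph, any such matrix must lie in $\mathfrak{H}_{2k}$. Now the hypotheses on $|A|$, $|B|$ and on the sequences $c_p$ in the statement to be proved are literally the hypotheses of Theorem~\ref{th:GaborKsparseOperatorVersion} with $K = 2k$, so that theorem forces every $H \in \mathfrak{H}_{2k} \cap \ker \cA$ to be $0$, irrespective of rank. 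This is strictly stronger than what Theorem~\ref{th:PhasLiftSparse}(1) demands, and $k$-sparse phase retrieval for the subsystem~\eqref{eq:GaborSubSystem} follows.

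There is essentially no obstacle left: the genuine content has already been packaged into Theorem~\ref{th:GaborKsparseOperatorVersion}, and the present statement is a dictionary translation of it into phase-retrieval language. The only genuinely new check is the coordinate-support characterization of Hermitian matrices with range in $W_\cK$, which is the brief computation above.
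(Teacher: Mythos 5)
Your proposal is correct and takes essentially the same route as the paper's own proof: apply Theorem~\ref{th:PhasLiftSparse}(1) with $D=I$, observe that a Hermitian matrix with range in the coordinate subspace $W_\cK$ (using Hermiticity to kill the columns as well as the rows outside $\cK$) lies in $\mathfrak{H}_{2k}$, and then invoke Theorem~\ref{th:GaborKsparseOperatorVersion} with $K=2k$ to conclude that the kernel of $\cA$ meets $\mathfrak{H}_{2k}$ only in $0$. No gaps.
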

\begin{proof}
We will use part $(1)$ of Theorem \ref{th:PhasLiftSparse} for $D=I,$ to show that $k$-sparse phase retrieval is possible for the system \eqref{eq:GaborSubSystem}. Let $H$ be an Hermitian operator with values in $\C^N_\cK=\{x \in \C^N, \, \supp(x) \subseteq \cK\}$ for some $\cK$ with $\abs{\cK}=2k$ for which $\cA(H)=0$ (where $\cA$ is the PhaseLift operator associated with \eqref{eq:GaborSubSystem}). We will prove that $H$ must be zero, from which the claim follows. Since the range of $H$ is contained in $\C^N_\cK$, we know that $H_{i,j}=0$ if $i \notin \cK$. Since $H_{i,j} = \overline{H_{j,i}}$, we also have $H_{i,j}=0$ if $j \notin \cK$. We can conclude that $H \in \mathfrak{H}_{2K}$, and by Theorem \ref{th:GaborKsparseOperatorVersion} it immediately follows that $H$ is zero, thus the theorem is proved.
\end{proof}

\begin{remark} 
We would like to state the following remarks related to Theorems \ref{theorem:GaborKsparseCond} and \ref{th:GaborKsparseOperatorVersion}.
\begin{enumerate}
 \item If $\theta_N+4k^2-2k+2 \geq N$, then the same theorem holds for $B =\Z_N$ and any $A$ with $\abs{A}\geq \theta_N+\hat{k}+1$. We can therefore also in this case reduce the number of measurements from $N^2$ to $(\theta_N+\hat{k}+1)N.$ Also note that since $\hat{k}$ must not be smaller than $2k$, the theorem does not yield any enhanced results for non-sparse vectors (we need the sequences $c_p$ to be $\hat{k}$-sparse to reduce the number of measurements, but to have at least $2k$ nonzero elements to ensure injectivity for $k$-sparse signals).

\item We note, that when $N$ is prime, the conditions of Theorem \ref{theorem:GaborKsparseCond} become much simpler. Namely, $$2k \leq \norm{c_p}_0 \leq \hat{k},$$ and the sets $A$ and $B$ should fulfill
$$\vert A \vert \geq \hat{k}, \quad \vert B \vert \geq (2k)^2-2k+1.$$ Thus, for example, if
we can find a Gabor system for which the inequality \eqref{eq:GaborKsparseCond}   is fulfilled as an equality, we will be able to do $k$-sparse phase retrieval with order of only $O(k\min(N,k^2))$ measurements. 

\item When $N$ is not prime, we have $\theta_N = N-2\sqrt{N},$ and the number of needed measurements is not as good as in the prime case, since we obtain
$$\vert A \vert \cdot \vert B \vert \geq (N-2\sqrt{N}+\hat{k}+1)(N-2\sqrt{N}+(2k)^2-2k+2),$$
but some improvement over $N^2$ could still be obtained in some cases. Furthermore, an extension of \cite{tao03uncertainty}  from $N$ prime to general $N$ was published in \cite{meshulam2006uncertainty}, in the form of the following property:

Let $d_1 <d_2$ be two consecutive divisors of $N.$ If $d_1 \leq k = \norm{f}_0 \leq d_2,$ then
$$\Vert \hat{f} \Vert_0 \geq \frac{N}{d_1d_2} (d_1+d_2-k).$$
Our function $\theta$ will in this case explicitly depend on $k$ and be equal to $N-k+\frac{N}{d_1d_2} (d_1+d_2-k).$ The smaller this value is, the less measurements will be needed for $k$-sparse injectivity. 

\item Theorem \ref{th:GaborKsparseOperatorVersion} is interesting from a different perspective, since $\mathfrak{H}_K$ can be viewed as a set of $K^2$-sparse vectors in $\C^{N^2}$ whose sparsity has a special structure. Thus, we have provided a deterministic construction which can theoretically recover those vectors from $O(K^3)$ linear measurements.
This is interesting since we know from conventional compressed sensing results \cite{foucart2013amathematical}, that deterministic constructions for stable recovery of $K^2$-sparse vectors require $O(K^4)$ linear measurements, whereas random constructions only need $O(K^2)$ measurements. Finding deterministic constructions which can accept sparsity levels on the order higher than the square root of the number of measurements is known as breaking the ``square-root bottleneck'' \cite{mixon2014explicit}.

Although in our case, $O(m)$ measurements are needed for sparsity level $m^{2/3},$ one has to bear in mind that the sparsity of the vectors is structured, and that our result is only about the injectivity of the measurements. 
In particular, we do not prove any recovery guarantees for a specific algorithm. Hence, we have not broken the square-root bottleneck, but the theorem can be seen as a step towards providing new results in this direction.

\end{enumerate}

\end{remark}
\end{subsection}

\begin{subsection}{Functions window in the Fourier domain as generators.}
As in the previous section, we now provide an example of a generator $g$ which fulfills our condition.
\begin{proposition} \label{prop:Fourierwindow}
	Let $N$ be prime and $2k+1< N$. 
	Further, let $v \in \C^N$ be a window of length $k+1,$ $v= \chi_{[0,k]}$, where $\chi_A$ denotes the characteristic function on the set $A \sse \Z_N.$ 
	Moreover, let $g$ be defined by $\hat{g}=v$, Then, $g$ satisfies \eqref{eq:GaborKsparseCond} with $\hat{k}=2k+1$ and therefore, $(2k+1)\min(4k^2-2k+1,N)$ measurements from the Gabor frame with $g$ as generator will do $k$-sparse phase retrieval.
\end{proposition}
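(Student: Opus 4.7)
The plan is to compute $c_p(\ell) = \sprod{g, g_{p,\ell}}$ explicitly as a function of $\ell$, show that its support has size exactly $2k+1$ for every $p \in \Z_N$, and then apply Theorem \ref{theorem:GaborKsparseCond}. Since $N$ is prime, $\theta_N = -1$, and the condition \eqref{eq:GaborKsparseCond} with $K = 2k$ reads $2k \leq \norm{c_p}_0 \leq \hat{k}$; the value $\norm{c_p}_0 = 2k+1$ then verifies both bounds with $\hat{k} = 2k+1$. The claimed measurement count follows because Theorem \ref{theorem:GaborKsparseCond} allows any $A \subseteq \Z_N$ with $\abs{A} \geq 2k+1$ and any $B \subseteq \Z_N$ with $\abs{B} \geq 4k^2-2k+1$, or $B = \Z_N$ when this exceeds $N$.

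To make the computation tractable, I would pass to the Fourier side. Using the standard intertwining relations $\widehat{T_p g} = M_{-p}\hat g$ and $\widehat{M_\ell g} = T_\ell \hat g$, together with Parseval's identity $\sprod{x,y} = N^{-1}\sprod{\hat x,\hat y}$, and the fact that $\hat g = v$ is real, one obtains
\begin{align*}
N\,\sprod{g, g_{p,\ell}} = \sprod{v,\, T_\ell M_{-p} v} = \sum_{n \in \Z_N} v(n)\,v(n-\ell)\,\omega^{-p(n-\ell)}.
\end{align*}
Since $v = \chi_{[0,k]}$, the product $v(n)v(n-\ell)$ vanishes unless $n$ lies in the intersection $[0,k] \cap (\ell + [0,k])$ taken modulo $N$. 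Using $2k+1 < N$, this intersection is nonempty precisely when $\ell \in \{-k,\ldots,k\} \bmod N$, already establishing $\norm{c_p}_0 \leq 2k+1$.

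For each such admissible $\ell$, a reindexing turns the sum into a partial geometric series
\begin{align*}
N\,\sprod{g, g_{p,\ell}} = \omega^{a(p,\ell)} \sum_{m=0}^{k-\abs{\ell}} \omega^{-pm}
\end{align*}
with a unimodular prefactor $\omega^{a(p,\ell)}$. For $p \equiv 0 \pmod N$ this is the positive integer $k - \abs{\ell} + 1$, hence nonzero. For $p \not\equiv 0 \pmod N$, the geometric sum vanishes only if $p(k - \abs{\ell} + 1) \equiv 0 \pmod N$; but $N$ being prime together with $1 \leq k - \abs{\ell} + 1 \leq k+1 \leq 2k+1 < N$ rules this out. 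Thus $c_p(\ell) \neq 0$ for all $2k+1$ admissible values of $\ell$, so $\norm{c_p}_0 = 2k+1$ uniformly in $p$, and Theorem \ref{theorem:GaborKsparseCond} finishes the argument.

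I anticipate no serious obstacle: the only mildly delicate point is handling the cyclic intersection on $\Z_N$ correctly and invoking primality of $N$ at precisely the step that excludes spurious cancellations in the geometric sum.
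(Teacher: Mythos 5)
Your proposal is correct and follows essentially the same route as the paper: pass to the Fourier side via Plancherel and the intertwining relations, observe that the support condition on $v=\chi_{[0,k]}$ forces $c_p(\ell)=0$ for $\abs{\ell}>k$, and evaluate the remaining cases as a partial geometric sum whose non-vanishing is guaranteed by the primality of $N$ since $p(k-\abs{\ell}+1)\not\equiv 0 \pmod N$. The only differences are immaterial sign and normalization conventions in the Fourier transform, which do not affect the non-vanishing argument.
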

\begin{proof}
	 The Plancherel formula implies that 
	 \begin{align*}
		c_p(\ell)=\sprod{\hat{g}, T_\ell M_p\hat{g}} \quad \text{for all }  \, (p,\ell) \in \Z_N^2
	\end{align*}
	Therefore,
	\begin{align} \label{eq:windowCond}
		\sprod{\hat{g}, T_\ell M_p\hat{g}} = \sum_{m\in \Z_N} \overline{v(m)}v(m-\ell)\omega^{p(m-\ell)} = \sum_{\substack{m \in [0,k] \text{ and } \\ m-\ell \in [0,k]}}\omega^{p(m-\ell)},
	\end{align}
	because of the way we defined $v$. Note that since $2k<N$, this sum is empty for $\abs{\ell}>k$.  Therefore, the sequences $c_p$ are $2k+1$-sparse for every $p,$ i.e. $\hat{k}$-sparse. 
		
	It remains to prove that for $\abs{\ell} \leq k,$ the expression above is not zero, and hence $\norm{c_p}_0=2k+1.$
	It suffices to consider $\ell\geq 0$, since the other case can be obtained from this one by the substitution $\ell \to - \ell$. Using the formula for geometric sums, we obtain
	\begin{align*}
	 \sum_{\ell \leq m \leq k }\omega^{p(m-\ell)} = \sum_{n=0}^{k-\ell}\omega^{pn}= \begin{cases} \dfrac{1-\omega^{p(k-\ell+1)}}{1-\omega^{p}},\ & \text{ if } p \neq 0, \\
	 k-\ell+1, \ & \text{ if }  p=0. \end{cases}
	\end{align*}
	The only way this could be zero when $\ell \leq k$ is that $p \neq 0$ and  $1-\omega^{p(k-\ell+1)}=0$. This would however mean that $N$ is a divisor of $p(k-\ell+1)\neq 0$. Since $N$ is prime and both $p$ and $(k-\ell+1)$ are smaller than $N$, this cannot be the case. Therefore, from Theorem \ref{theorem:GaborKsparseCond} we conclude that any subsets $A \subseteq \Z_N,$ $B \subseteq \Z_N$ with 
	$$\vert A \vert \geq 2k+1, \quad \vert B \vert \geq (2k)^2-2k+2$$ will do $k$-sparse phase retrieval.
\end{proof}

\begin{remark}
	The choice of $\hat{g}$ as a characteristic function of $[0,k]$ is not necessary -- any generically chosen function with support on $[0,k]$ will also lead to a Gabor system with the same properties.
	To see this, note that if $\abs{\ell}<k$, the expression \eqref{eq:windowCond} is a non-trivial polynomial in the variables $\mathfrak{re}(v), \mathfrak{im}(v)$. Since we have proved that there is a particular choice of $v$ so that all polynomials do not vanish on $v$, \eqref{eq:GaborKsparseCond} will be satisfied for generic $v$.
\end{remark}

\begin{remark}
		The matrices provided to prove that the frames considered in Section \ref{subseq:negativeExamples} do not allow phase retrieval were all matrices with range in $\C^N_\cK$ for a $\cK$ with $\abs{\cK}=2$. Hence, the considerations made there in fact proved that the frames are not allowing phase retrieval for $\mathcal{C}_k$ for any $k\geq 2$ (although they might still allow phase retrieval for some other class of signals).
	\end{remark}
	 \end{subsection}

\begin{subsection}{Signals sparse in Fourier domain}
	 After spending some time discussing the standard sparsity case, it is worth noting that similar results hold for signals which are sparse in the Fourier basis (dictionary) $F$. Recall the famous commutation relation of $F$ with translations and modulations:
	 \begin{align} \label{eq:commModTransFor}
	 	\Pi_{(p,\ell)}F=M_\ell T_pF =FT_{\ell}M_{-p}= \omega^{\ell p}F\Pi_{(-p,\ell)}\quad \text{ for all } (p,\ell) \in \Z_N^2.
	 \end{align}
	 This formula allows us to translate the results provided in the previous section to this new setting.

\begin{theorem} \label{theorem:GaborFourierKsparseCond}
	Let $N$ be such that the uncertainty principle \eqref{eq:uncertainty} holds and $F$ denote the Fourier basis. Let $g$ have the following property: for each $\ell$, the sequence $\tilde{c}_\ell=(\sprod{g,g_{\cdot,\ell}})$ formed by letting $p$ run obeys
	\begin{align} \label{eq:GaborFourierKsparseCond}
		\theta_N+2k+1 \leq \norm{\tilde{c}_\ell}_0 \leq \hat{k}
	\end{align}
	for some $k$ and $\hat{k}.$ Then, for any subsets $A, B \sse \dZ_N$ with 
	\begin{align*}
	 \abs{A} \geq \theta_N + (2k)^2-(2k)+2, \quad	 \abs{B} \geq \theta_N+\hat{k}+1,
	 \end{align*}
	the set $ \set{g_\lambda, \,  \lambda \in A \times B} $
	allows $Fk$-sparse phase retrieval.
\end{theorem}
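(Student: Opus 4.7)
The plan is to reduce this theorem to Theorem~\ref{th:GaborKsparseOperatorVersion} by conjugating everything with the Fourier matrix $F$. First, I would apply part~(1) of Theorem~\ref{th:PhasLiftSparse} with $D=F$: it suffices to show that for every $\cK\subseteq\Z_N$ with $\abs{\cK}=2k$, the kernel of the PhaseLift operator $\cA$ associated to the Gabor subsystem $(g_\lambda)_{\lambda\in A\times B}$ contains no non-zero Hermitian matrix with range in $FW_\cK=\Span\{Fe_j:j\in\cK\}$. Given such an $H$, the spectral theorem lets me write $H=\sum_i\lambda_i(Fz_i)(Fz_i)^*=F\tilde HF^*$, where each $z_i$ has $\supp z_i\subseteq\cK$, and $\tilde H:=\sum_i\lambda_iz_iz_i^*$ is Hermitian with support in $\cK\times\cK$. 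Thus $\tilde H\in\mathfrak{H}_{2k}$, and the task becomes to show $\tilde H=0$.

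The next step is to transport the phaseless measurements through this conjugation. From~\eqref{eq:commModTransFor} and $F^*F=N\cdot I$ one derives, by a short calculation, an identity of the form
\begin{align*}
F^*g_\lambda = c(\lambda)\,\tilde g_{\mu(\lambda)},
\end{align*}
where $\tilde g$ is a suitable Fourier conjugate of $g$ (essentially $F^*g$ up to normalization), the scalar $c(\lambda)\in\bT$ is unimodular, and $\mu:\Z_N^2\to\Z_N^2$ is a bijection that interchanges the roles of the translation and modulation parameters (up to sign). Consequently
\begin{align*}
\sprod{g_\lambda,Hg_\lambda}=\sprod{F^*g_\lambda,\tilde H F^*g_\lambda}=\sprod{\tilde g_{\mu(\lambda)},\tilde H\,\tilde g_{\mu(\lambda)}},
\end{align*}
so $\cA(H)$ vanishing on $A\times B$ is equivalent to the PhaseLift operator $\tilde \cA$ for the generator $\tilde g$ annihilating $\tilde H$ on the product $\mu(A\times B)=B'\times A'$, with $\abs{B'}=\abs{B}$ and $\abs{A'}=\abs{A}$.

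The final step is to match the hypothesis on $g$ with that required by Theorem~\ref{th:GaborKsparseOperatorVersion} applied to $\tilde g$, $\tilde H$, $A'$, $B'$. Because $F$-conjugation swaps translations with modulations, the sequence $(\sprod{\tilde g,\tilde g_{p,\cdot}})_\ell$ appearing in \eqref{eq:GaborKsparseCond} coincides, entry by entry up to unimodular factors, with the sequence $(\sprod{g,g_{\cdot,\ell}})_p$ appearing in~\eqref{eq:GaborFourierKsparseCond}. The sparsity bounds therefore transfer; and the swap $\abs{A}\leftrightarrow\abs{B'}$, $\abs{B}\leftrightarrow\abs{A'}$ converts the cardinality assumptions of this theorem into exactly those required by Theorem~\ref{th:GaborKsparseOperatorVersion} (which is precisely why the roles of $A$ and $B$ are interchanged between Theorems~\ref{theorem:GaborKsparseCond} and~\ref{theorem:GaborFourierKsparseCond}). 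That theorem then forces $\tilde H=0$, hence $H=0$, completing the argument.

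The principal obstacle I anticipate is bookkeeping: checking carefully, with the conventions of \eqref{eq:commModTransFor}, that the coordinate swap induced by $F$-conjugation is indeed the one needed to align the asymmetric size conditions of Theorem~\ref{theorem:GaborKsparseCond} with those stated here, and that all the unimodular phases and normalization constants absorb cleanly into the identity $\cA(H)=0\Leftrightarrow\tilde\cA(\tilde H)=0$. The algebra is routine once the commutation is unwound, but the sign and normalization conventions must be tracked with care.
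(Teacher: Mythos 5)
Your proposal is correct and follows essentially the same route as the paper: both invoke Theorem \ref{th:PhasLiftSparse} with $D=F$, write $H=F\tilde{H}F^*$ for a Hermitian $\tilde{H}$ supported on $\cK\times\cK$, and use the commutation relation \eqref{eq:commModTransFor} to swap the roles of translation and modulation so that the standard-basis argument applies with the size conditions on $A$ and $B$ interchanged. The only cosmetic difference is that you transport the conjugation onto the vectors $g_\lambda$ (via $\sprod{g_\lambda,F\tilde{H}F^*g_\lambda}=\sprod{F^*g_\lambda,\tilde{H}F^*g_\lambda}$) and invoke Theorem \ref{th:GaborKsparseOperatorVersion} as a black box for the generator $F^*g$, whereas the paper conjugates $H$, recomputes $\sprod{\Pi_{(p,\ell)},H}_{HS}$ in terms of $\widehat{\cH}^F_{-\ell}(p)$, and reruns the earlier proof with the indices swapped.
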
 

\begin{proof}
We would like to apply Theorem \ref{th:PhasLiftSparse}, with $D=F.$ Using the notation of that theorem, let $H$ be an arbitrary Hermitian matrix with range contained in $W_\cK$ for some $\cK$ with $\abs{\cK}=2k$. We may write $H = FH^FF^*$ for some other Hermitian $H^F$, which then has a range which is contained in $\C^N_\cK$. Let us now proceed as in the proof of Theorem \ref{theorem:GaborCond} and calculate 
	\begin{align*}
		\sprod{\Pi_{(p,\ell)}, H}_{HS} &= \tr(\Pi_{(p,\ell)}^*FH^FF^*)= \tr(F^*\Pi_{(p,\ell)}^*FH^F)= \sprod{F^*\Pi_{(p,\ell)}F,H^F}_{HS}\\
		&=\sprod{\omega^{\ell p}\Pi_{-p,\ell},H^F}_{HS}= \omega^{-\ell p}\widehat{\cH}^F_{-\ell}(p).
	\end{align*}  
	We used the commutation relation \eqref{eq:commModTransFor}, and the fact that $F$ is unitary. We arrive at
	\begin{align*}
	 N\sprod{g_\lambda, H g_\lambda} = \sum_{p, \ell} \omega^{-\ell p} \widehat{\cH}^F_{-\ell}(p)\sprod{g_\lambda,\Pi_{(p, \ell)}g_\lambda}.
	\end{align*}
	This formula is very similar to \eqref{eq:FourTransAtEta}, essentially, the only difference is that the roles of $p$ and $\ell$ have interchanged. Further, the vectors $\cH^F$ have the same properties as the vectors $\cH$ in the proof of Theorem  \ref{theorem:GaborFourierKsparseCond} (since $H^F$ has the same properties as $H$). These two facts makes it clear that we can use the exact same technique as in that proof to prove this theorem. We leave the details to the reader.	
\end{proof}
	 
It is not hard to construct a concrete example of a generator $g$ which fulfills the condition \eqref{eq:GaborFourierKsparseCond}. We only have to note that the roles of translations and modulations have been interchanged. Hence, we should no longer use a $g$ which has short support in Fourier domain, but instead one with short support in spatial domain. With this insight, we may use the exact same steps as in the proof of Proposition \ref{prop:Fourierwindow} to deduce the following.
	 
\begin{proposition}
	Let $N$ be prime and $2k+1< N$. 
	Then $(2k+1)\min(4k^2-2k+1,N)$ measurements from a Gabor frame generated by generic windows $g$ of length $k+1$ will do $Fk$-sparse phase retrieval.
\end{proposition}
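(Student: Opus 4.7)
The plan is to apply Theorem \ref{theorem:GaborFourierKsparseCond} to the concrete choice $g = \chi_{[0,k]}$, and then promote the conclusion to generic windows of length $k+1$ via a polynomial-vanishing argument analogous to the remark following Proposition \ref{prop:Fourierwindow}. The proof of Proposition \ref{prop:Fourierwindow} can essentially be reused verbatim with the roles of translations and modulations interchanged, which is exactly the spatial/frequency swap highlighted in the paragraph between the two propositions.

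First I would compute the sequence $\tilde{c}_\ell(p) = \sprod{g, M_\ell T_p g}$ directly in the spatial domain. For $g = \chi_{[0,k]}$,
\begin{align*}
\tilde{c}_\ell(p) = \sum_{n \in \Z_N} \overline{g(n)}\, \omega^{\ell n}\, g(n-p) = \sum_{\substack{n \in [0,k] \\ n-p \in [0,k] \, (\m N)}} \omega^{\ell n}.
\end{align*}
Since $2k+1 < N$, the cyclic intervals $[0,k]$ and $p + [0,k]$ overlap modulo $N$ if and only if $p \in \{-k, -k+1, \ldots, k\} \pmod N$. Hence $\norm{\tilde{c}_\ell}_0 \le 2k+1$, so we may take $\hat{k} = 2k+1$.

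Next I would verify that $\tilde{c}_\ell(p) \ne 0$ for every such $p$. For $0 \le p \le k$, a geometric-sum computation gives
\begin{align*}
\tilde{c}_\ell(p) = \omega^{\ell p} \sum_{m=0}^{k-p} \omega^{\ell m} = \begin{cases} k-p+1, & \ell = 0, \\[2pt] \omega^{\ell p}\,\dfrac{1 - \omega^{\ell(k-p+1)}}{1 - \omega^{\ell}}, & \ell \ne 0, \end{cases}
\end{align*}
which vanishes only if $N \mid \ell(k-p+1)$. Since $N$ is prime and both $\ell$ and $k-p+1$ lie strictly between $0$ and $N$, this is impossible. The wraparound case $p = N-q$ with $1 \le q \le k$ is symmetric (reindex $n \mapsto n+q$) and handled by the same argument. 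Therefore $\norm{\tilde{c}_\ell}_0 = 2k+1$ for every $\ell$; and since $N$ is prime we have $\theta_N = -1$, so condition \eqref{eq:GaborFourierKsparseCond} is met in the form $2k \le \norm{\tilde{c}_\ell}_0 \le \hat{k}$.

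Invoking Theorem \ref{theorem:GaborFourierKsparseCond} then gives $Fk$-sparse phase retrieval for any $A, B \sse \Z_N$ with $\abs{A} \ge 4k^2 - 2k + 1$ and $\abs{B} \ge 2k+1$. If $4k^2 - 2k + 1 \le N$ this yields $(2k+1)(4k^2 - 2k + 1)$ measurements; otherwise we may take $A = \Z_N$, giving $(2k+1)N$ measurements, so the total count $(2k+1)\min(4k^2-2k+1, N)$ suffices in either regime. For the genericity upgrade, $\tilde{c}_\ell(p)$ is a non-trivial bilinear polynomial in the real and imaginary parts of the entries $g(0),\ldots,g(k)$; since we exhibited one window for which none of the relevant polynomials vanishes, they fail to vanish on an open dense set, so generic length-$(k+1)$ windows satisfy the same hypothesis. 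The only technical care needed is in the wraparound regime $p \in \{N-k,\ldots,N-1\}$, but the assumption $2k+1 < N$ together with the primality of $N$ makes this step essentially routine.
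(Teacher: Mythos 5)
Your proposal is correct and takes essentially the same route the paper intends: the paper's own "proof" is just the instruction to repeat the argument of Proposition \ref{prop:Fourierwindow} with translations and modulations interchanged, which is exactly what you carry out (explicitly computing $\tilde{c}_\ell(p)=\sprod{g,M_\ell T_p g}$ for $g=\chi_{[0,k]}$, verifying $\norm{\tilde{c}_\ell}_0=2k+1$ via the geometric sum and primality of $N$, invoking Theorem \ref{theorem:GaborFourierKsparseCond} with $\theta_N=-1$, and upgrading to generic windows by polynomial non-vanishing). Your write-up is in fact more complete than the paper's, which leaves all of these details to the reader.
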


\end{subsection}  
  
  \end{section}

\begin{section}{An algorithm for phase retrieval using Gabor measurements} \label{sec:algorithm}

	The idea of the proof of Theorem \ref{theorem:GaborKsparseCond} can be used to design an algorithm to reconstruct signals from their Gabor intensity measurements. We start by recovering $H$, as in the proof, and then we compute the closest rank $1$ operator $xx^*$ by spectral decomposition of $H$. A detailed description is given in Algorithm 1.
	
	\begin{algorithm} \label{alg:algorithm1}
		\caption{Simple Gabor Phase Retrieval (SGPR)}
		\KwData{A generator $g\in \C^N$, sets $A,B \sse \Z_N$,  the measurements   $b(q,j)=N \abs{\sprod{x,g_{q,j}}}^2, (q,j) \in A \times B.$}
		\KwResult{An estimate $x_0 \in \C^N$ of $x$.}
		
		\nl \For{$q=0 \dots N-1$}{
			Solve  $\hat{V}^q(j)=b(q,j), \, j \in B$  for $V^q$.	
		}
		
	\nl 	\For{$p=0 \dots N-1$}{
			Solve  $N \cdot \check{w}^p(q)=	V^q(p), \, q \in A$ for $w^p$. 
		}

		\nl \For{$p=0 \dots N-1$}{
				\For{$\ell=0 \dots N-1$}{
						\If{$\sprod{g,g_{p,l}}\neq 0$}{			
					Set $\hat{\cH}_p(\ell)=w^p(\ell)/\sprod{g,g_{p,l}}$.  
					
					Add $\ell$ to the set $\Lambda_p$.}
		}
			Solve $\widehat{\cH}_p(\ell)=w^p(\ell)/\sprod{g,g_{p,l}}, \, \ell \in \Lambda_p$ for $\cH_p$. 
		
		}
		
	Reconstruct $H$ from $H(i,i-p)=\cH_p(i)$
		
	Calculate the eigenpair $(\lambda, v)$ of $H$ corresponding to the largest eigenvalue. 
	
	Set $x_0=\sqrt{\lambda}x$.

	\end{algorithm}

In steps $(1)$, $(2)$ and $(3)$ one has to invert a Fourier transform. If all values of the transformed vector are known, one can simply use the standard fast inverse Fourier transform to compute this, and the signal will be perfectly recovered. If one on the other hand does not know all the values (not all Gabor measurements are given), some other method has to be used, where sparsity can be employed. We have chosen Basis Pursuit \cite{chen1998atomic}. This is a standard approach in compressed sensing when looking for a sparse solution $x$ of the equation $Ax=b$. The algorithm consists of solving the following optimization problem:
	\begin{align*}
		\min \norm{x}_1 \text{ s.t. } Ax=b.
	\end{align*}
We solve this problem with CVX, a package for specifying and solving convex programs \cite{cvx}. All experiments were conducted on a Intel Core i7-3517U Processor running Windows 8. 

\renewcommand{\thesubfigure}{\Alph{subfigure}}
\begin{figure}[h!]
        \centering
        \begin{subfigure}[b]{0.48\textwidth}
     \includegraphics[width=\textwidth]{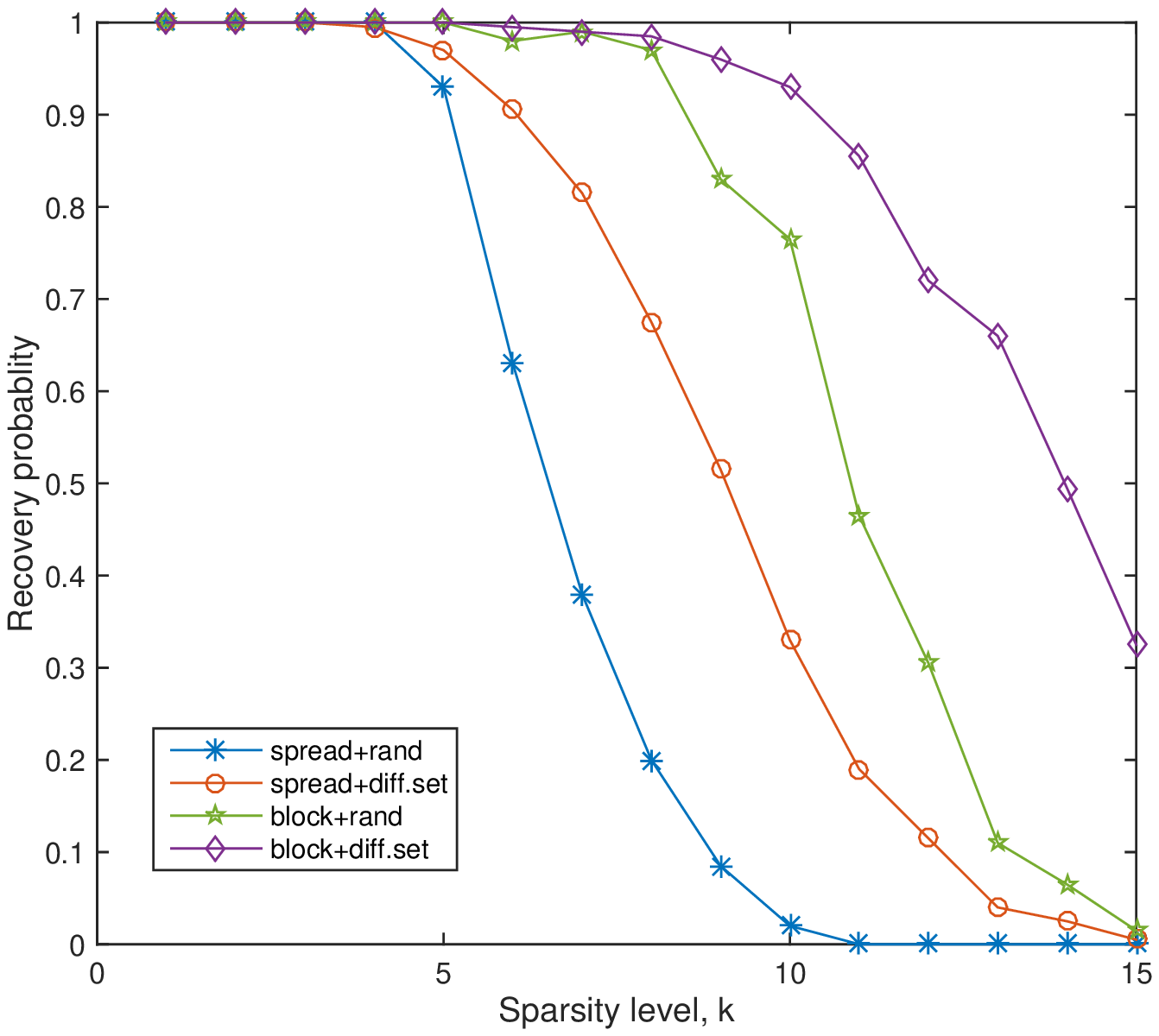}
                \caption{Partial modulations only}
                \label{fig:diffS}
        \end{subfigure}
        \begin{subfigure}[b]{0.48\textwidth}
         \includegraphics[width=\textwidth]{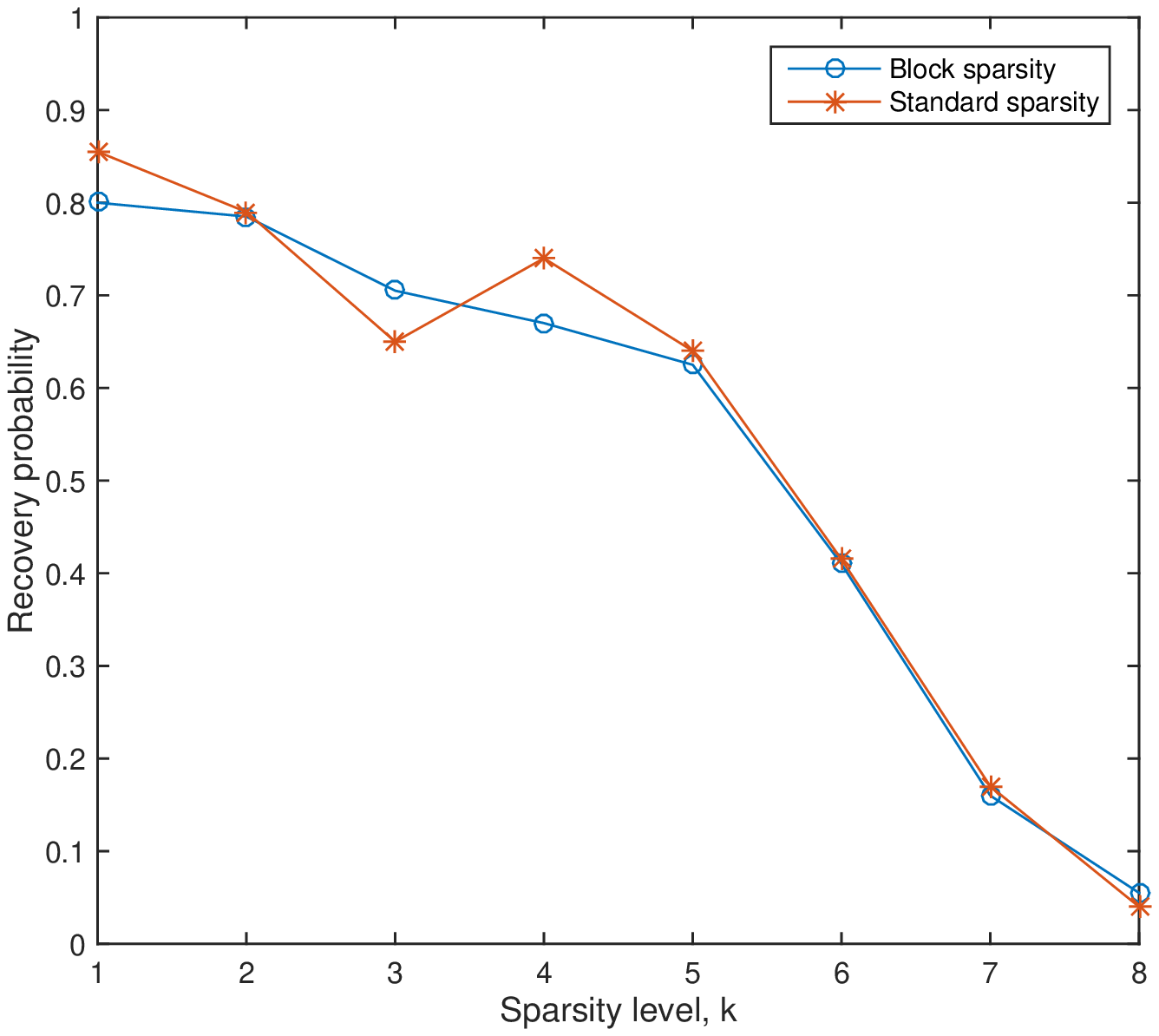}
                \caption{Partial modulations and translations}
                \label{fig:diffM}
        \end{subfigure}
       \caption{Success rate}\label{fig:experiment}
\end{figure}

We now present the results of the numerical experiments for testing Algorithm $1.$ 
In Figure \ref{fig:experiment}, we plot the success rate of recovery of sparse signals via Algorithm $1.$ We have fixed the length of the signal $N=67$ (a prime which gives $3$ $\m 4,$ as needed for difference sets of \textit{Family 1}).  We want to recover two types of signals: $k$-sparse signals, where $k$ non-zero random values are distributed on a random support, and $k$-sparse block signals,  where one random block of $k$ subsequent entries is assigned $k$ random values.

In Figure \ref{fig:diffS}, we also have chosen two different generators for the Gabor system: a complex random signal, and a characteristic function of a difference set, described in Section \ref{sec:arbitrary}. Here, we use $0.5N^2$ measurements,
namely all $N$ translations, and random $0.5N$ from the modulations. With this setup, we use $\ell_1$ minimization only in the Step $(1),$ and in $(2)$ and $(3)$ we use the fast Fourier transform. For a fixed sparsity from $1$ to $15,$ we repeat the experiment $T=200$ times, and count a trial as successful, if the normalized squared error is smaller than $10^{-2}.$

In Figure \ref{fig:diffM}, we do the same experiment, but we take partial measurements in both directions: translation and modulation. Namely, for $N=67,$ we take $0.52N$ translations, and $0.7N$ modulations at random. The generator here, as described in Proposition \ref{prop:Fourierwindow}, is a short Fourier window, with length $8$. Now, we need to use Basis Pursuit in all steps $(1),$ $(2)$ and $(3),$ which in turn leads to a  lower recovery rate. We made $T=100$ trials for every sparsity level.

\renewcommand{\thesubfigure}{\Alph{subfigure}}

\begin{figure}[h!]
        \centering
        \begin{subfigure}[b]{0.48\textwidth}
    \includegraphics[width=\textwidth]{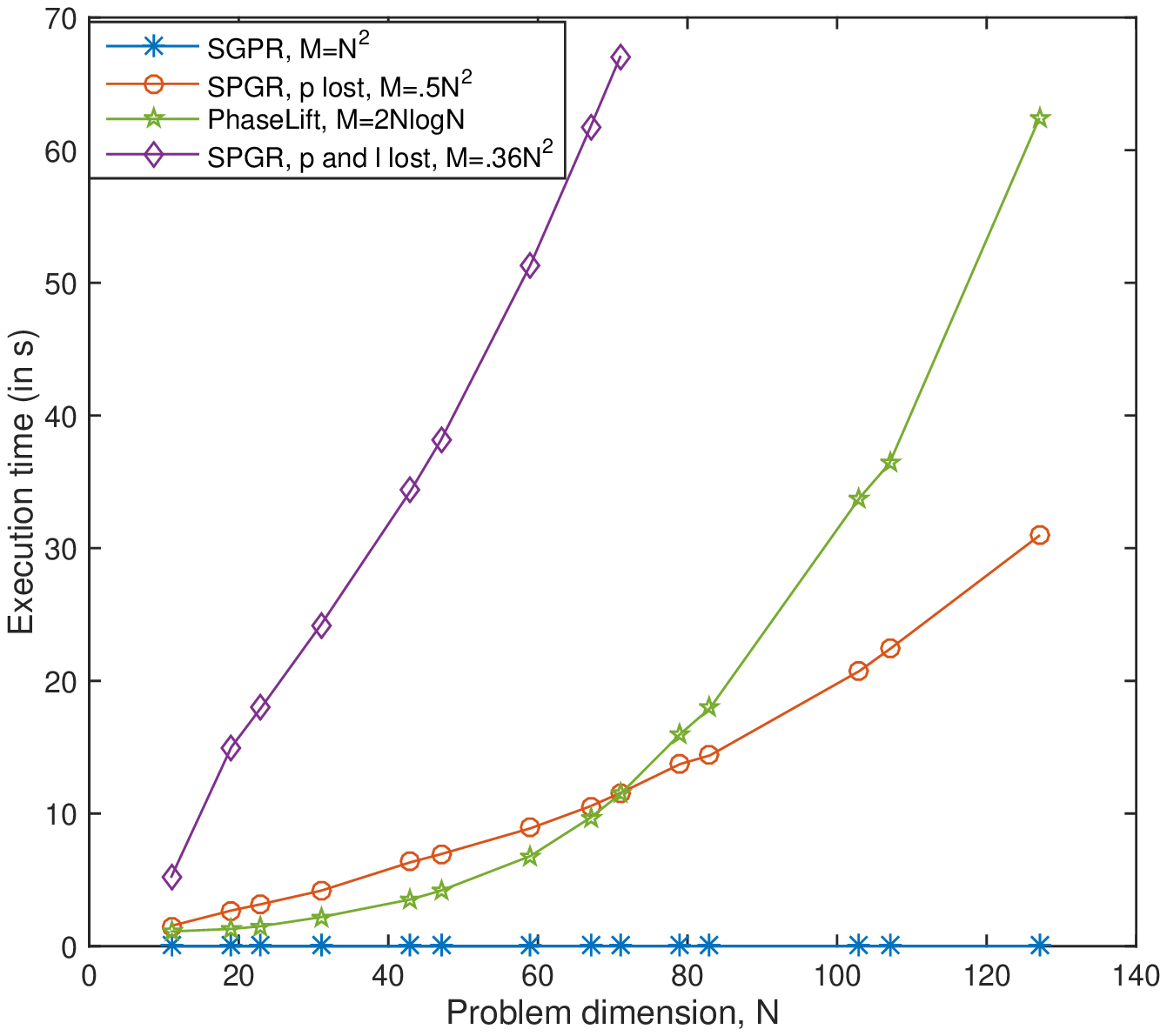}
        \caption{SGPR vs. PhaseLift}\label{fig:diffN}
        \end{subfigure}
        \begin{subfigure}[b]{0.48\textwidth}
    \includegraphics[width=\textwidth]{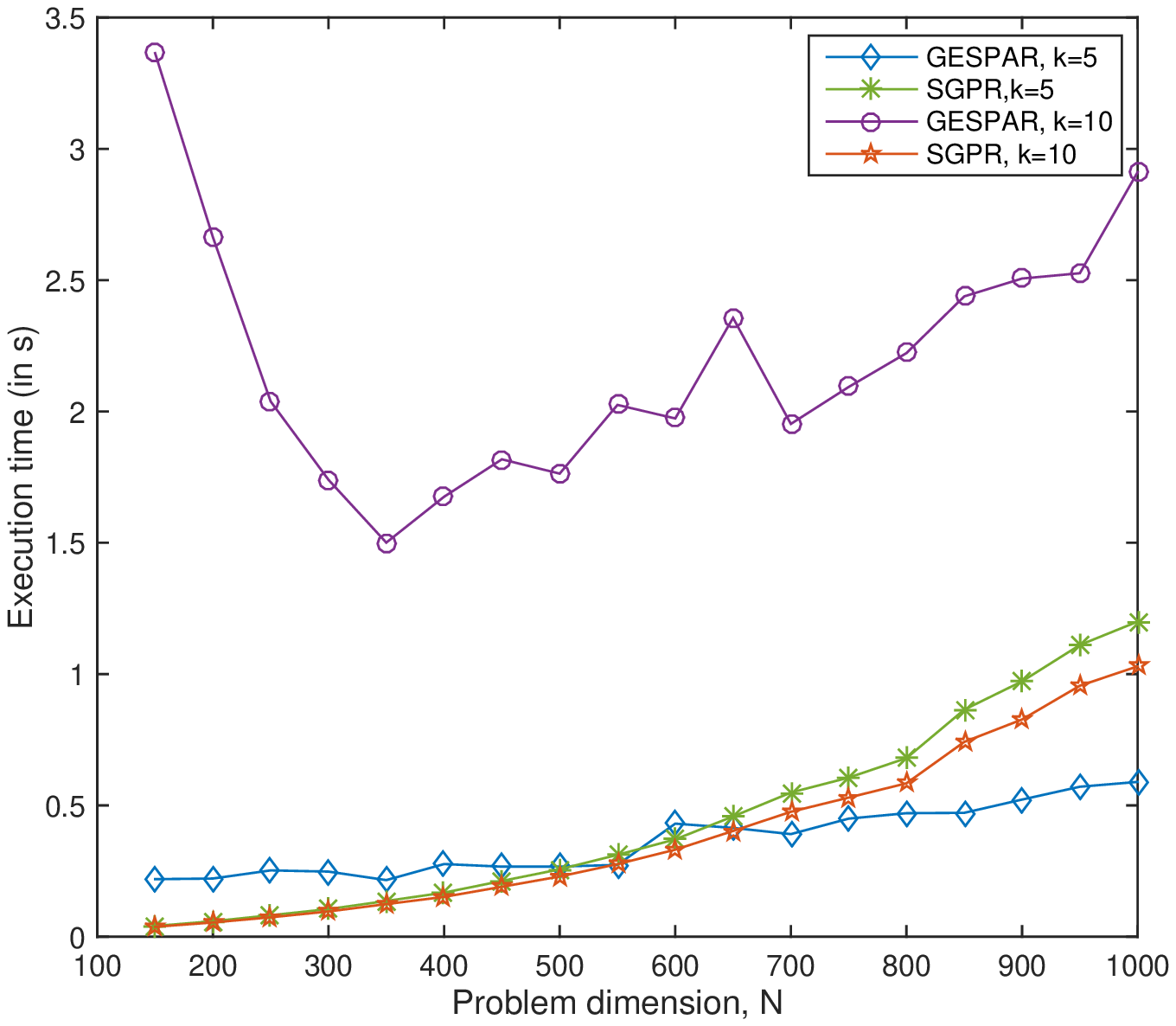}
        \caption{SGPR vs. GESPAR}\label{fig:diffNgespar}
        \end{subfigure}
   \caption{Recovery time}\label{fig:experiment}      
\end{figure}	

In Figure \ref{fig:diffN}, we test the speed of our algorithm in comparison to the PhaseLift algorithm \cite{candes2013phase}, implemented using the CVX package. We also use Gabor measurements for it, but only $2\log(N) N,$ taken at random. We plot the average execution time over $T=50$ trials, and see that as the dimension grows, our method becomes faster, although the number of measurements is much larger. Also, if we are using the full set of measurements, the time needed is incomparably smaller to both of the other methods -- since then there is no minimization problem included. In this case, also, we will always recover the signal with probability $1,$ independently of the sparsity level. 

In Figure \ref{fig:diffNgespar}, we compare the execution time of Algorithm $1$  from all $N^2$ measurements  to the GESPAR algorithm \cite{shechtman2014gespar}, a greedy algorithm for recovery of sparse signals from Fourier intensity measurements (in our experiment we use $2N$ measurements). This algorithm is very fast for high dimensions, but since it is iterative, it becomes slower as the sparsity increases for a fixed dimension of the signal. We illustrate this behavior in Figure \ref{fig:diffNgespar}, where for every dimension, we measure the average time of recovery of signals with sparsity $k=5$ and $k=10.$  We see that the GESPAR algorithm is faster, when we want to recover a signal which has only $5$ nonzeros, but if this number is larger, our algorithm becomes faster than the GESPAR, since it does not strongly depend on the sparsity level.

We would like to mention that our algorithm for all $N^2$ measurements is also stable to additive noise in the measurements. This follows from Theorem \ref{th:framebounds} and can be intuitively explained by the fact that the only troublesome part is the division in Step $(3).$ If the generator $g$ is such, that the values $\langle g, g_{p,l} \rangle$ are bounded away from zero, one can guarantee robustness to noise. For the recovery from less than $N^2$ measurements, we leave the detailed investigation on this matter for future work.

\end{section}

\section*{Acknowledgment}
The authors would like to thank Gitta Kutyniok and Peter Jung for fruitful discussions and remarks, and Martin Sch\"afer, who assisted in the proof-reading of the manuscript. 
I.~B. acknowledges support by the Berlin Mathematical School. A.~F. acknowledges support by Deutsche 
Forschungsgemeinschaft (DFG) Grant KU 1446/18-1 and by the Deutscher Akademischer Austausch Dienst (DAAD).

\bibliography{bibliographyPR}{}  

\begin{thebibliography}{10}

\bibitem{alexeev2014phase}
B.~Alexeev, A.~S. Bandeira, M.~Fickus, and D.~G. Mixon.
\newblock Phase retrieval with polarization.
\newblock {\em SIAM J. Imag. Sci.}, 7(1):35--66, 2014.

\bibitem{alltop1980complex}
W.~O. Alltop.
\newblock Complex sequences with low periodic correlations.
\newblock {\em IEEE Trans. Inform. Theory}, 26(3):350--354, 1980.

\bibitem{bajwa2010gabor}
W.~U. Bajwa, R.~Calderbank, and S.~Jafarpour.
\newblock Why {G}abor frames? {T}wo fundamental measures of coherence and their
  role in model selection.
\newblock {\em J. Commun. Networks}, 12(4):289--307, 2010.

\bibitem{balan2006signal}
R.~Balan, P.~Casazza, and D.~Edidin.
\newblock On signal reconstruction without phase.
\newblock {\em Appl. Comput. Harmon. Anal.}, 20(3):345--356, 2006.

\bibitem{bandeira2014saving}
A.~S. Bandeira, J.~Cahill, D.~G. Mixon, and A.~A. Nelson.
\newblock Saving phase: Injectivity and stability for phase retrieval.
\newblock {\em Appl. Comput. Harmon. Anal.}, 37(1):106--125, 2014.

\bibitem{candes2013phase}
E.~J. Cand{\`e}s, Y.~C. Eldar, T.~Strohmer, and V.~Voroninski.
\newblock Phase retrieval via matrix completion.
\newblock {\em SIAM J. Imag. Sci.}, 6(1):199--225, 2013.

\bibitem{casazza2012finite}
P.~G. Casazza and G.~Kutyniok, editors.
\newblock {\em Finite frames: Theory and applications}.
\newblock Applied and Numerical Harmonic Analysis. Birkh\"auser/Springer, New
  York, 2013.

\bibitem{chen1998atomic}
S.~S. Chen, D.~L. Donoho, and M.~A. Saunders.
\newblock Atomic decomposition by basis pursuit.
\newblock {\em SIAM J. Sci. Comput.}, 20(1):33--61, 1998.

\bibitem{dinitz1992contemporary}
J.~H. Dinitz and D.~R. Stinson.
\newblock {\em Contemporary design theory: A collection of Surveys}, volume~26.
\newblock John Wiley \& Sons, 1992.

\bibitem{eldar2015sparse}
Y.~Eldar, P.~Sidorenko, D.~Mixon, S.~Barel, and O.~Cohen.
\newblock Sparse phase retrieval from short-time {F}ourier measurements.
\newblock {\em IEEE Signal Process Lett.}, 22(5), 2015.

\bibitem{eldar2012compressed}
Y.~C. Eldar and G.~Kutyniok, editors.
\newblock {\em Compressed sensing}.
\newblock Cambridge University Press, Cambridge, 2012.
\newblock Theory and applications.

\bibitem{foucart2013amathematical}
S.~Foucart and H.~Rauhut.
\newblock {\em A mathematical introduction to compressive sensing}.
\newblock Applied and Numerical Harmonic Analysis. Birkh\"auser/Springer, New
  York, 2013.

\bibitem{gallager08Gauss}
R.~G. Gallager.
\newblock Circularly-symmetric gaussian random vectors, 2008.

\bibitem{cvx}
M.~Grant and S.~Boyd.
\newblock {CVX}: Matlab software for disciplined convex programming, version
  2.1.
\newblock \url{http://cvxr.com/cvx}, Mar. 2014.

\bibitem{groechenig01foundations}
K.~Gr{\"o}chenig.
\newblock {\em Foundations of time-frequency analysis}.
\newblock Applied and Numerical Harmonic Analysis. Birkh\"auser Boston, Inc.,
  Boston, MA, 2001.

\bibitem{guizar2008phase}
M.~Guizar-Sicairos and J.~R. Fienup.
\newblock Phase retrieval with transverse translation diversity: a nonlinear
  optimization approach.
\newblock {\em Opt. Express}, 16(10):7264--7278, 2008.

\bibitem{iwen2015fast}
M.~Iwen, A.~Viswanathan, and Y.~Wang.
\newblock Fast phase retrieval for high-dimensions.
\newblock {\em arXiv preprint arXiv:1501.02377}, 2015.

\bibitem{lam2000vanishing}
T.~Y. Lam and K.~H. Leung.
\newblock On vanishing sums of roots of unity.
\newblock {\em J. Algebra}, 224(1):91--109, 2000.

\bibitem{meshulam2006uncertainty}
R.~Meshulam.
\newblock An uncertainty inequality for finite abelian groups.
\newblock {\em European J. Combin.}, 27(1):63--67, 2006.

\bibitem{mixon2014explicit}
D.~G. Mixon.
\newblock Explicit matrices with the restricted isometry property: Breaking the
  square-root bottleneck.
\newblock {\em arXiv preprint arXiv:1403.3427}, 2014.

\bibitem{nawab1983signal}
S.~H. Nawab, T.~F. Quatieri, and J.~S. Lim.
\newblock Signal reconstruction from short-time {F}ourier transform magnitude.
\newblock {\em IEEE Trans. Acoust. Speech Signal Process.}, 31(4):986--998,
  1983.

\bibitem{ohlsson2014conditions}
H.~Ohlsson and Y.~C. Eldar.
\newblock On conditions for uniqueness in sparse phase retrieval.
\newblock In {\em IEEE Int. Conf. Acoust. Speech Signal Process. 2014
  (ICASSP)}, pages 1841--1845. IEEE, 2014.

\bibitem{pfander2013gabor}
G.~E. Pfander.
\newblock Gabor frames in finite dimensions.
\newblock In {\em Finite frames}, pages 193--239. Springer, 2013.

\bibitem{shechtman2014gespar}
Y.~Shechtman, A.~Beck, and Y.~C. Eldar.
\newblock G{ESPAR}: Efficient phase retrieval of sparse signals.
\newblock {\em IEEE Trans. Signal Process.}, 62(4):928--938, 2014.

\bibitem{tao03uncertainty}
T.~Tao.
\newblock An uncertainty principle for cyclic groups of prime order.
\newblock {\em Math. Res. Lett.}, 12(1):121--128, 2005.

\bibitem{wang2014phase}
Y.~Wang and Z.~Xu.
\newblock Phase retrieval for sparse signals.
\newblock {\em Appl. Comput. Harmon. Anal.}, 37(3):531--544, 2014.

\bibitem{xia2005achieving}
P.~Xia, S.~Zhou, and G.~B. Giannakis.
\newblock Achieving the {W}elch bound with difference sets.
\newblock {\em IEEE Trans. Inf. Theory}, 51(5):1900--1907, 2005.

\end{thebibliography}
\bibliographystyle{abbrv}  

\end{document}